\def\RR{\mathbb{R}}
\def\div{\mathrm {div}\,}
\def\vphi{\varphi}
\def\pa{\partial}
\def\na{\nabla}
\def\eps{\varepsilon}
\def\ds{\displaystyle}
\newtheorem{theorem}{Theorem}[section]
\newtheorem{lemma}[theorem]{Lemma}
\newtheorem{proposition}[theorem]{Proposition}
\newtheorem{remark}[theorem]{Remarks}
\newtheorem{rk&ex}[theorem]{Remarks \& Examples}
\def\qed{\hbox{${\vcenter{\vbox{                        
   \hrule height 0.4pt\hbox{\vrule width 0.4pt height 6pt
   \kern5pt\vrule width 0.4pt}\hrule height 0.4pt}}}$}}
\title{Fractional diffusion limit for collisional kinetic equations: A moments method}
\author{A. Mellet\thanks{Partially supported by NSF Grant DMS-0901340}}
\date{\small Department of Mathematics, \\
University of Maryland, \\
College Park MD 20742\\
USA}
\begin{document}

\maketitle

\begin{abstract}
This paper is devoted to hydrodynamic limits of linear kinetic equations.
We consider situations in which the thermodynamical equilibrium is described by a heavy-tail distribution function rather than a maxwellian distribution.
A similar problem was addressed in \cite{MMM} using Fourier transform and it was shown that the long time/small mean free path behavior of the solution of the kinetic equation is described by a fractional diffusion equation. 
In this paper, we propose a different method to obtain similar results. This method is somewhat reminiscent of the so-called "moments method" which plays an important role in kinetic theory. 
This new method allows us to consider  space dependent collision operators (which could not be treated in \cite{MMM}).
We believe that it also provides the relevant tool to address nonlinear problems.
\end{abstract}

\paragraph{Mathematics Subject Classification (2000):} 76P05, 35B40, 26A33

\paragraph{Keywords:} Kinetic equations, linear Boltzmann equation, asymptotic analysis, diffusion limit, anomalous diffusion limit, fractional diffusion,  relaxation equation,  anomalous diffusive time scale.

\section{Introduction}
Our goal is to study the asymptotic behavior as $\eps$ goes to zero of the solution $f^\eps(x,v,t)$ of
the following kinetic equation:
\begin{equation}\label{eq:kin0} 
\left\{
\begin{array}{ll}
\theta(\eps) \pa_t f^\eps + \eps v\cdot \na_x f^\eps =L(f^\eps)\quad &x\in\RR^N,\; v\in\RR^N,\; t>0 \\[4pt]
f^\eps(x,v,0) = f_0(x,v)\qquad& x\in\RR^N,\; v\in\RR^N
\end{array}
\right.
\end{equation} 
when the operator $L$  is a  linear relaxation collision operator of the form
\begin{eqnarray}
L(f)&  = &  \int_{\RR^N} \left[ \sigma(x,v,v') f(v') - \sigma(x,v',v) f(v)\right]\, dv'\label{eq:L}
\end{eqnarray}
($\theta(\eps)$, which is such that $\lim_{\eps \to 0}\theta(\eps)=0$ will be chosen later).
As usual, the collision operator is decomposed into a ``gain'' term and a ``loss'' term as follows:
$$L(f)= K(f) -\nu f $$
with
$$ K(f) (x,v) = \int_{\RR^N} \sigma(x,v,v') f(x,v')\, dv'$$
and the collision frequency $\nu$ defined by
$$ \nu(x,v) = \int_{\RR^N} \sigma(x,v',v)\, dv'.$$

This singular perturbation problem is very classical. The underlying goal is the derivation of macroscopic models that describe the evolution of a cloud of particles (represented, at the microscopic level, by the distribution function $f^\eps(x,v,t)$) for  small Knudsen number (of order $\eps$) and large time (of order $\theta(\eps)^{-1}$).
The collision operator (\ref{eq:L}) is one of the simplest operator that  models diffusive, mass-preserving interactions of the particles with the surrounding medium.
We recall  that under reasonable assumptions on the collision kernel $\sigma(x,v,v')$ (see \cite{DGP}), one can show that there exists  a unique equilibrium function $F(x,v)\geq 0$ satisfying 
$$L(F) = 0 \quad \mbox{ and } \quad \int_{\RR^N} F(x,v)\, dv =1 \mbox{ a.e. } x\in\RR^N.$$
Formally, Equation (\ref{eq:kin0}) then leads to 
$$\lim_{\eps\to 0} f^\eps \in \ker(L) = \{\rho(x,t) F(x,v)\, ;\, \rho:\RR^N\times (0,\infty)\longrightarrow \RR\}.$$
Our goal is to show that the density $\rho(x,t)$ is solution to an equation of hydrodynamic type.

\smallskip

The derivation of hydrodynamic limits for kinetic equations such
as (\ref{eq:kin0}) was first investigated by E.~Wigner \cite{W}, A.
Bensoussan, J.L.~Lions and G.~Papanicolaou \cite{BLP} and E.W.~Larsen and J.B.~Keller \cite{LK} and it has been the topic of many papers since (see in particular C. Bardos, R. Santos and  R. Sentis \cite{BSS} and P. Degond, T. Goudon and F. Poupaud \cite{DGP} and references therein). 
In \cite{DGP}, P.~Degond, T.~Goudon
and F.~Poupaud considered very general collision operators of the form
(\ref{eq:L}).  When $F$ decreases "quickly enough" for large values
of $|v|$ (it is often assumed that  $F$ is a Maxwellian distribution function of the form $F(v) = C \exp(\frac{-|v|^2}{2})$),
they proved in particular that for $\theta(\eps) = \eps^2$,
$f^\eps(x,v,t)$ converges, when $\eps$ goes to zero, to a function of
the form $\rho(x,t)F(x,v)$ where the density $\rho(x,t)$ solves a drift-diffusion equation: 
\begin{equation}\label{eq:d0}
 \pa_t\rho - \div_x( D \na_x \rho+U\rho) = 0.
\end{equation}
This result is proved in \cite{DGP} under some assumptions on $F$ that guarantee in particular that the diffusion matrix $D$ and the coefficient $U$, which depends on $F$ and $\sigma$, are finite.

When $F$ is a power tail (or heavy tail) distribution function, satisfying
\begin{equation}\label{eq:tail} 
F(v)\sim\frac{\kappa_0}{|v|^{N+\alpha}} \mbox{ as } |v|\to\infty
\end{equation}
for some $\alpha>0$, the diffusion  matrix $D$ in (\ref{eq:d0}) might however be infinite. 
In  that case, the diffusion limit leading to (\ref{eq:d0}) breaks down, which means that the choice of  time scale $\theta(\eps)=\eps^2$ was inappropriate.
It is the goal of this paper to investigate such situations. 

Power tail distribution functions arise in various contexts, such as astrophysical plasmas (see \cite{ST},  \cite{MR}) or in the study of   granular media (see \cite{EB}, \cite{BG} for the so-called ``inelastic Maxwell model'' introduced in \cite{BCG}). 
We refer to \cite{MMM} for further references concerning the relevance of  power tail distribution functions in various applications.
\smallskip

In \cite{MMM}, S. Mischler, C. Mouhot and the author addressed this problem in the   space homogeneous case (that is with $\sigma$ independent of $x$). It was shown that when $F$ satisfies (\ref{eq:tail}) and the collision frequency $\nu$ satisfies 
$$ \nu(v) \sim \nu_0 |v|^{\beta} \mbox{ as } |v|\to\infty,$$
then for some values of $\alpha$ and $\beta$ ($\alpha>0$ and $\beta<\min(\alpha,2-\alpha)$), the choice of an appropriate time scale $\theta(\eps)$ leads to a fractional diffusion equation instead of~(\ref{eq:d0}).
Let us give a precise statement in the simplest case which corresponds to collision kernels satisfying
$$ \nu_1 F(v) \leq \sigma(v,v')\leq \nu_2 F(v)$$ 
(i.e. $\beta=0$) when (\ref{eq:tail}) holds with $\alpha\in(0,2)$.
Then, taking $\theta(\eps)=\eps^\alpha$, it is shown in \cite{MMM}  that the function $f^\eps(x,v,t)$ converges  to  $\rho(x,t)F(v)$ where $\rho(x,t)$ solves the following fractional diffusion equation:
\begin{equation}\label{eq:fracdiff0}
 \pa_t\rho + \kappa (- \Delta)^{\alpha/2} \rho = 0.
 \end{equation}
\smallskip 

We recall that the fractional power of the Laplacian appearing in (\ref{eq:fracdiff0}) can be defined using the Fourier transform by
$$ \mathcal F((- \Delta)^{\alpha/2} f ) (k) = |k|^\alpha \mathcal F( f)(k).$$
Alternatively, we have the following singular integral representation:
\begin{equation}\label{eq:def} 
(- \Delta)^{\alpha/2} f (x) = c_{N,\alpha} \mbox{PV} \int_{\RR^N} \frac{f(x)-f(y)}{|x-y|^{N+\alpha}}\, dy.
\end{equation}
We refer the reader to Landkof \cite{L} and Stein \cite{S} for a discussion of the properties of fractional operators and singular integrals (we only need to know the definitions above for the purpose of this paper).
\smallskip

Anomalous diffusion limits for kinetic models was first investigated in the case of a gas confined between two plates, when the distance
between the plates goes to $0$ (see \cite{BGTh}, \cite{G}, \cite{D1},
\cite{D2}).  In that case, the limiting equation is still a standard
diffusion equation, but the time scale is anomalous ($\theta(\eps)\sim
\eps^2 \ln(\eps^{-1})$) and 
 the particles travelling in directions nearly parallel to the plates are responsible for the anomalous scaling. 
A fractional diffusion equation such as (\ref{eq:fracdiff0}) was obtained for the first time as a diffusive limit
from a linear phonon-Boltzmann equation  simultaneously by A. Mellet, S. Mischler and C. Mouhot in \cite{MMM} and by M. Jara, T.  Komorowski and S. Olla in \cite{JKO}  (via
a very different probability approach).

\smallskip

The method developed in \cite{MMM} to establish the result quoted above (and more) relies on the use of Fourier transform with respect to $x$, and an explicit computation of the symbol of the asymptotic operator.
Unfortunately, this method is rather complicated to implement when the collision operator depends on the space variable (the Fourier transform of $L(f^\eps)$ involves some convolutions), and virtually impossible to use in a nonlinear framework.
In this paper, we thus propose a different method, which is closer to the so-called ``moment method'' classically used to study hydrodynamic limits of kinetic equations. 
This method relies on the introduction of an appropriate auxiliary problem and a weak formulation of (\ref{eq:kin0}).
More precisely, the corner stone of the proof is to multiply Equation (\ref{eq:kin0}) by a test function $\chi^\eps(x,v,t)$, solution of the auxiliary equation
$$
\nu(x,v) \chi^\eps - \eps v\cdot \na_x \chi^\eps = \nu(x,v) \vphi(x,t)
$$
where $\vphi$ is a smooth test function. 
\smallskip

This method allows us to consider space dependent collision kernels (which could not be treated previously) and we believe  that it  will provide the relevant tools to address non-linear problems (such as equations coupled to Poisson's equation via an electric  field or equations involving non-linear collision operators).

\smallskip

In the next section, we present the main results of this paper, starting with a simple study case which we will use to present the method in a simpler framework (Theorem~\ref{thm:1}). 
We then address the case of general collision operators (Theorem~\ref{thm:2}).
Section \ref{sec:proof1} and \ref{sec:proof2} are devoted to the proof of the main theorems.

\vspace{20pt}
\section{Main results}
\subsection{The simplest case}
The simplest framework corresponds to  collision kernels $\sigma(x,v,v')$ of the form
$$ \sigma(v,v')= F(v)$$
with $F:\RR^N\rightarrow (0,\infty)$ positive function normalized so that
$$ \int_{\RR^N} F(v)\, dv=1. $$
In that case, the collision operator reduces to:
\begin{equation}\label{eq:L0}
 L(f) = \int_{\RR^N} f(v')\, dv' F(v) - f(v),
 \end{equation}
and we have
$$
 \ker(L) = \{\rho(x,t) F(v)\, ;\, \rho:\RR^N\times \RR \to  \RR\}.
$$
We will also assume that the equilibrium distribution function $F(v)$   satisfies
\begin{equation}\label{eq:F0}
 F\in L^\infty(\RR^N), \quad  \quad \mbox{ and }  F(v)=F(-v) \mbox{ for all $v\in\RR^N$} 
\end{equation}
(note that the symmetry assumption is always crucial in deriving diffusion  equations from kinetic models).
\smallskip 

In that case, classical arguments (see for instance \cite{DGP}) show that if we choose $\theta(\eps)=\eps^2$ in (\ref{eq:kin0}), then $f^\eps$ converges as $\eps$ goes to zero to $\rho(x,t)F(v)$ with $\rho$ solution of: 
$$ \pa_t \rho -  \pa_{x_i} (\kappa_{ij}\pa_{x_j} \rho) = 0 $$
where
$$\kappa_{ij}= \int_{\RR^N}v_i v_j F(v)\, dv.$$
However, when $F$ satisfies (\ref{eq:tail}) with $\alpha\in(0,2)$, we  get $\kappa_{ii}=\infty$, so this limit does not make sense. 
Our first result addresses this case:
\begin{theorem}\label{thm:1}
Assume that $L$ is given by (\ref{eq:L0}) with the normalized  distribution function $F(v)> 0$ satisfying (\ref{eq:F0}) and
\begin{equation}\label{eq:Fa} 
|v|^{N+\alpha}\, F(v) \longrightarrow \kappa_0>0 \quad \mbox{ as } |v|\to\infty
\end{equation}
for some  $\alpha\in(0,2)$.
Let $f^\eps(x,v,t)$ be a solution of  (\ref{eq:kin0}) with $\theta(\eps)=\eps^\alpha$ and 
 $f_0\in L^2_{F^{-1}}(\RR^N\times\RR^N)\cap L^1(\RR^N\times\RR^N)$, $f_0\geq 0$.

Then $f^\eps$ converges weakly in
  $L^\infty(0,T;L^2_{F^{-1}}(\RR^N\times\RR^N))$ to $\rho(x,t) F(v)$ with $\rho$ solution of 
\begin{equation}\label{eq:limit0}
\left\{
\begin{array}{l}
\pa_t \rho +   \kappa (-\Delta)^{\alpha/2}  ( \rho )=0\\[4pt]
\rho(x,0)=\rho_0(x)
\end{array}
\right.\end{equation}
where
$$\kappa= \frac{\kappa_0}{c_{N,\alpha}} \int_0^\infty z^\alpha e^{-z}\, dz$$
and $\rho_0=\int_{\RR^N} f_0(x,v)\, dv$ (the constant $c_{N,\alpha}$ is the constant appearing in the definition of the fractional Laplace operator (\ref{eq:def})). 
\end{theorem}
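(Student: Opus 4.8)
The plan is to establish the limit equation through the weak formulation suggested by the introduction, using the auxiliary test function $\chi^\eps$ to extract the fractional Laplacian as $\eps\to 0$. First I would establish uniform a priori estimates: multiplying (\ref{eq:kin0}) by $f^\eps/F$ and integrating yields, via the dissipative structure of $L$, a bound on $f^\eps$ in $L^\infty(0,T;L^2_{F^{-1}})$ independent of $\eps$, together with a control on the ``microscopic'' part $f^\eps-\rho^\eps F$ (where $\rho^\eps=\int f^\eps\,dv$) of order $\eps^{\alpha/2}$. These bounds guarantee, up to a subsequence, a weak limit of the form $\rho(x,t)F(v)$, so the real content is identifying the equation satisfied by $\rho$.

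The heart of the argument is the auxiliary problem. For a fixed smooth compactly supported test function $\vphi(x,t)$, I would solve
\begin{equation}\label{eq:aux}
\nu(x,v)\,\chi^\eps - \eps\, v\cdot\na_x\chi^\eps = \nu(x,v)\,\vphi(x,t),
\end{equation}
which in the simplest case (\ref{eq:L0}) reduces to $\nu\equiv 1$ and is solved explicitly by integrating along characteristics:
$$
\chi^\eps(x,v,t) = \int_0^\infty e^{-s}\,\vphi(x+\eps s\, v,\,t)\,ds.
$$
Multiplying (\ref{eq:kin0}) by $\chi^\eps$ and integrating against $dv$, the gain/loss structure of $L$ is built precisely so that the collision term pairs cleanly against $\chi^\eps$: the operator $L^*\chi^\eps$ combines with $-\eps v\cdot\na_x\chi^\eps$ to produce, after using (\ref{eq:aux}), a term involving only $\rho^\eps(x,t)$ times a kernel. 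The time-derivative term contributes $\eps^\alpha\pa_t\int f^\eps\chi^\eps\,dv$, so I would divide by $\eps^\alpha$ and pass to the limit.

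The crucial and most delicate step is computing the limit of the rescaled operator
$$
\frac{1}{\eps^\alpha}\int_{\RR^N}\big(\vphi(x,t)-\chi^\eps(x,v,t)\big)F(v)\,dv
$$
and showing it converges to $\kappa(-\Delta)^{\alpha/2}\vphi$. Writing $\vphi-\chi^\eps = \int_0^\infty e^{-s}\big(\vphi(x)-\vphi(x+\eps s\,v)\big)\,ds$ and inserting $F$, the heavy-tail behavior (\ref{eq:Fa}) is exactly what makes the integral over $v$ scale like $\eps^\alpha$: after the change of variables $w=\eps s\, v$, the region $|v|\to\infty$ where $F(v)\sim\kappa_0|v|^{-N-\alpha}$ dominates, and one recovers the singular integral (\ref{eq:def}) up to the normalizing constant $c_{N,\alpha}$, with the factor $\int_0^\infty z^\alpha e^{-z}\,dz$ emerging from integrating the exponential weight after the rescaling. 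The main obstacle is making this convergence rigorous and uniform in $x$: one must split the $v$-integral into a bounded region (where $\vphi-\chi^\eps = O(\eps)$ by smoothness, contributing a vanishing $O(\eps^{2-\alpha})$ term since $\alpha<2$) and the tail region (where the singular-integral limit lives), while controlling the principal-value cancellation using the symmetry $F(v)=F(-v)$ from (\ref{eq:F0}) to kill the first-order term. I expect the tail estimate and the justification of passing the limit inside the $x$-integral against the weak limit of $\rho^\eps$ to be the technical crux; the strong compactness needed for $\rho^\eps$ would come from an averaging-lemma or direct argument exploiting the $\eps^{\alpha/2}$ control on the microscopic part.
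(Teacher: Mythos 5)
Your plan follows the paper's own route essentially step by step: the $L^2_{F^{-1}}$ energy estimate giving the $\eps^{\alpha/2}$ control on the microscopic part, the auxiliary transport problem solved along characteristics, the weak formulation in which the collision term collapses onto $\rho^\eps$ times the rescaled moment $\eps^{-\alpha}\int F(\chi^\eps-\vphi)\,dv$, and the identification of the limit of that moment by splitting $v$-space into a bounded region (handled by the symmetry of $F$ and a second-order Taylor expansion, giving $O(\eps^{2-\alpha})$) and the tail region where the change of variables $w=\eps z v$ and the asymptotics (\ref{eq:Fa}) produce the singular integral with the factor $\int_0^\infty z^\alpha e^{-z}dz$. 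This is exactly Steps 1--4 and Proposition 3.2 of the paper (the paper additionally isolates an error term $I_3$ comparing $F$ to its power-law asymptote on the tail, which your sketch subsumes implicitly).

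The one genuine flaw is your final claim that the limit passage in $\int \rho^\eps\, \mathcal L^\eps(\vphi)\,dx\,dt$ requires \emph{strong} compactness of $\rho^\eps$, to be obtained by an averaging lemma. No strong compactness is needed, and seeking it here is a trap. The paper passes to the limit by pairing the \emph{weak} convergence of $\rho^\eps$ against the \emph{uniform-in-$(x,t)$} convergence of $\mathcal L^\eps(\vphi)$ to $-\kappa(-\Delta)^{\alpha/2}\vphi$ (Proposition 3.2), using in addition only the $L^\infty(0,\infty;L^1)$ bound on $\rho^\eps$ to control the pairing at spatial infinity, since $(-\Delta)^{\alpha/2}\vphi$ is not compactly supported: writing
$$\int \rho^\eps \mathcal L^\eps(\vphi) = \int \rho^\eps\big[\mathcal L^\eps(\vphi)+\kappa(-\Delta)^{\alpha/2}\vphi\big] - \kappa\int \rho^\eps (-\Delta)^{\alpha/2}\vphi,$$
the first term is $O\big(\|\rho^\eps\|_{L^\infty_t L^1_x}\|\mathcal L^\eps(\vphi)+\kappa(-\Delta)^{\alpha/2}\vphi\|_{L^\infty}\big)\to 0$ and the second converges by weak convergence. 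Indeed, this is the entire point of the moments method: all the $\eps$-dependence is transferred to the test function, whose convergence is strong, so only weak convergence of the density is required. By contrast, an averaging-lemma route is doubtful in this regime: the regularity gained by velocity averaging degenerates under the scaling $\eps^\alpha\pa_t+\eps v\cdot\na_x$, and for $\alpha\le 1$ the flux $\int v f^\eps\,dv$ need not even be finite (since $\int |v|F\,dv=\infty$), so one cannot extract time-equicontinuity of $\rho^\eps$ from the local conservation law either. Replace that last step by the weak-strong pairing above and your argument coincides with the paper's proof.
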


In this result (and in the rest of this paper),  $L^2_{F^{-1}}(\RR^N\times\RR^N)$ denotes the weighted $L^2$ set equipped with the norm:
$$ ||f||_{ L^2_{F^{-1}}(\RR^N\times\RR^N)} =\left( \int_{\RR^N}\int_{\RR^N} \frac{|f(x,v)|^2}{F(v)}\, dv\, dx\right)^{1/2}.$$

Note that Theorem \ref{thm:1} is proved in \cite{MMM} using  Fourier transform. 
We  give a complete proof of this result in Section \ref{sec:proof1}, because
it provides the simplest framework to present our new ``moment'' method. 
In the next section, we obtain new results using this same method.
We point out that the fractional Laplace operator will be obtained here in its singular integral form, rather than its Fourier symbol. This explains that the diffusion coefficient  $\kappa$ is given by very different formulas here and in \cite{MMM}.

\vspace{20pt}

\subsection{General space-dependent collision operators}
We now consider  general collision operators of the form (\ref{eq:L}).
We are mainly interested in situations in which the cross-section $\sigma$ depends on $x$ (this is the case that could not be treated in \cite{MMM}).
Assuming that the equilibrium function $F$ is still independent of $x$, the 
classical diffusion limit, corresponding to the time scale $\theta(\eps)=\eps^2$, leads to
$$ \pa_t \rho - \div_x(D(x)\na_x \rho) = 0 $$
where
$$D(x)=\int_{\RR^n} v\otimes\chi(x,v)F(v)\, dv, \qquad \mbox{ with $\chi$ solution of }\quad  L^*(\chi)=-v$$
($L^*$ denotes the adjoint operator to $L$ in $L^2_v(\RR^N)$).
As before, it is readily seen that for some equilibrium distribution  function $F$ and collision kernel $\sigma$, the matrix $D(x)$ may be infinite. These are the situations that we wish to investigate.
\smallskip

Before stating the result, we need to make the conditions on $\sigma$ and $F$ precise.
The first assumptions are very standard:

\medskip

\noindent {\bf Assumptions (A1)} {\it The cross-section $\sigma(x,v,v')$ 
  is non negative and locally integrable on $\RR^{2N}$ for all $x$.
The collision frequency $\ds \nu(x,v)=\int_{\RR^N} \sigma(x,v',v)\, dv'$ satisfies
$$ 
\nu(x,-v)=\nu(x,v) > 0 \quad \mbox{ for all } x,v\in\RR^N\times \RR^N.
$$
}

\noindent {\bf Assumptions (A2)} {\it There exists a function $ F(v) \in
 L^1(\RR^N)$ \emph{independent of $x$} such that
\begin{equation}\label{def:F}
L(F)=0. 
\end{equation}
Furthermore, the function $F$ is symmetric,  positive and normalized to $1$: 
$$ 
F(-v) =F(v) > 0 \,\, \mbox{ for all }  \,\, v\in \RR^N
 \quad\hbox{and}\quad \int_{\RR^N } F(v)\, dv=1.
$$
}

\medskip

Note that under classical assumptions on $\sigma$, the existence of an
equilibrium function is in fact a consequence of Krein-Rutman's
theorem (see \cite{DGP} for details).  In general, however, this function may depend on $x$ while we assume here that $F$ depends only on $v$ (we discuss in Section \ref{sec:Fx} the case of $x$-dependent equilibrium function).
A particular case in which these assumptions are satisfied is when $\sigma$ is such that 
\begin{eqnarray}\label{def:micro}
\forall \, v,v' \in \RR^N \qquad 
\sigma(x,v,v') =b(x,v,v') F(v), \quad \mbox{ with }  b(x,v',v) = b(x,v,v').
\end{eqnarray} 
In that case, we say that
$\sigma$ satisfies a {\it detailed balanced principle} or a {\it
  micro-reversibility principle}, while the more general assumption
(\ref{def:F}) is called a {\it general balanced principle}.
  
\medskip

The next assumptions concern the  behavior of $F$ and $\nu$ for large
$|v|$.
In order to keep things simple, we will make strong assumptions on $\nu$ and $F$ that makes the limiting process easier (see  Remark \ref{rem:1} below):

\medskip
\noindent {\bf Assumptions (B1)}
{\it 
 There exists $\alpha \in(0,2)$ and a constant $\kappa_0>0$  
such that
\begin{equation}\label{eq:F0def}
|v|^{\alpha+N} F(v) \longrightarrow  \kappa_0  \  \mbox{ as } |v|\rightarrow \infty.
\end{equation}
}

\noindent
{\bf Assumptions (B2)} {\it
There exists $\nu_1$ and $\nu_2$ positive constants such that
$$ \nu_1 F(v) \leq \sigma(x,v,v') \leq \nu_2 F(v).$$
In particular, integrating this condition with  respect to $v$, we deduce:
$$
0<\nu_1\leq \nu(x,v')\leq\nu_2 , \qquad \mbox{ for all } x,v'\in\RR^N\times\RR^N.
$$
Furthermore, we assume that there exists a function $\nu_0(x)$ (satisfying $\nu_1\leq\nu_0(x)\leq\nu_2$) such that
\begin{equation}\label{eq:nu0def}
  \nu(x,v) \longrightarrow \nu_0(x) \ \mbox{ as } |v|\rightarrow \infty.
\end{equation}
uniformly with respect to $x$.
Finally, we assume that $\nu$ is $\mathcal C^1$ with respect to $x$ and 
$$ ||D_x\nu(x,v)||_{L^\infty(\RR^{2N})} \leq C.$$
}

\medskip

We are now ready to state our main result:

\begin{theorem}\label{thm:2}
Assume that (A1), (A2), (B1) and (B2) hold and 
let $f^\eps(x,v,t)$ be a solution of  (\ref{eq:kin0}) with $\theta(\eps)=\eps^\alpha$ and 
 $f_0\in L^2_{F^{-1}}(\RR^N\times\RR^N)\cap L^1(\RR^N\times\RR^N)$, $f_0\geq 0$.

Then $f^\eps$ converges weakly in
  $L^\infty(0,T;L^2_{F^{-1}}(\RR^N\times\RR^N))$ to $\rho(x,t) F(v)$ with $\rho$ solution of 
\begin{equation}\label{eq:limit}
\left\{
\begin{array}{l}
\pa_t \rho +\kappa_0  \mathcal L(\rho )=0\\[4pt]
\rho(x,0)=\rho_0(x)
\end{array}
\right.\end{equation}
where $\rho_0=\int_{\RR^N} f_0(x,v)\, dv$ and $\mathcal L$ is an elliptic  operator of order $\alpha$ defined by the singular integral: 
$$\mathcal L(\rho)=\mbox{PV}\int_{\RR^N} \gamma(x,y)\frac{\rho(x)-\rho(y)}{|x-y|^{N+\alpha}}\, dy$$
with
$$\gamma(x,y) = \nu_0(x)\nu_0(y) \int_0^\infty z^\alpha e^{\ds -z  \int_0^1\nu_0((1-s)x+sy)\, ds}\, dz.$$
\end{theorem}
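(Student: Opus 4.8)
The plan is to derive a weak formulation of (\ref{eq:kin0}) by multiplying against the auxiliary function $\chi^\eps$, and then to pass to the limit $\eps\to0$ directly in the resulting integral identity, reading off the operator $\mathcal L$ from an explicit rescaling of one velocity integral. First I would record the a priori estimate: multiplying (\ref{eq:kin0}) by $f^\eps/F$ and integrating in $x,v$, the transport term drops (since $F$ is independent of $x$, $\int\int v\cdot\na_x(|f^\eps|^2/F)=0$), and the coercivity of the relaxation operator in $\LF$ (which under (A2),(B2) controls $f^\eps-\rho^\eps F$, with $\rho^\eps=\int f^\eps\,dv$) gives
$$\frac{\eps^\alpha}{2}\frac{d}{dt}\|f^\eps\|_{\LF}^2 \le -c\int_{\RR^N}\!\int_{\RR^N}\frac{|f^\eps-\rho^\eps F|^2}{F}\,dv\,dx.$$
Hence $f^\eps$ is bounded in $L^\infty(0,T;\LF)$, $\rho^\eps$ is bounded in $L^\infty(0,T;L^2_x)$, and $g^\eps:=f^\eps-\rho^\eps F$ satisfies $\int_0^T\|g^\eps\|_{\LF}^2\le C\eps^\alpha$. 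Extracting weak limits, $\rho^\eps\wto\rho$ and $f^\eps\wto\rho F$; the whole task is to identify the equation solved by $\rho$.

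Next I would solve the auxiliary equation $\nu\chi^\eps-\eps v\cdot\na_x\chi^\eps=\nu\vphi$ explicitly along characteristics,
$$\chi^\eps(x,v,t)=\int_0^\infty \nu(x-\eps v\tau,v)\,\vphi(x-\eps v\tau,t)\,e^{\ds-\int_0^\tau \nu(x-\eps vs,v)\,ds}\,d\tau,$$
which yields $\|\chi^\eps\|_\infty\le\|\vphi\|_\infty$, the convergence $\chi^\eps\to\vphi$, and (since $\pa_t\chi^\eps$ solves the same equation with datum $\pa_t\vphi$) $\pa_t\chi^\eps\to\pa_t\vphi$. Multiplying (\ref{eq:kin0}) by $\chi^\eps$, integrating by parts in $x$ and $t$, and using the identity $L^*(\chi^\eps)+\eps v\cdot\na_x\chi^\eps=K^*(\chi^\eps)-\nu\vphi=:\Phi^\eps$ (exactly what the auxiliary equation is designed to produce), I obtain
$$\int_0^T\!\!\int\!\int f^\eps\,\Phi^\eps = \eps^\alpha\Big[\int\!\int f^\eps\chi^\eps\Big]_0^T-\eps^\alpha\int_0^T\!\!\int\!\int f^\eps\,\pa_t\chi^\eps.$$
Dividing by $\eps^\alpha$, the right-hand side converges to $[\int\rho\vphi\,dx]_0^T-\int_0^T\int\rho\,\pa_t\vphi$ (using $\int F=1$), so everything reduces to understanding the left-hand side at order $\eps^\alpha$. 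Using the adjoint identity $\int f^\eps K^*(\chi^\eps)\,dv=\int K(f^\eps)\chi^\eps\,dv$ together with the general balance $L(F)=0$, which gives $K(\rho^\eps F)=\rho^\eps\nu F$, I split
$$\int_{\RR^N} f^\eps\Phi^\eps\,dv = \rho^\eps\int_{\RR^N}\nu F\,[\chi^\eps-\vphi]\,dv + \int_{\RR^N} g^\eps\Phi^\eps\,dv.$$
Since $\Phi^\eps$ is bounded by a $v$-independent quantity of size $O(\eps^{\min(\alpha,1)})$ while $\int_0^T\|g^\eps\|_{\LF}^2\le C\eps^\alpha$, Cauchy--Schwarz in $\LF$ gives $\eps^{-\alpha}\int_0^T\int\int g^\eps\Phi^\eps=O\big(\eps^{\min(\alpha/2,\,1-\alpha/2)}\big)\to0$ for all $\alpha\in(0,2)$. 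Thus the limit is entirely carried by $\eps^{-\alpha}\int\nu F[\chi^\eps-\vphi]\,dv$, paired against the weakly convergent $\rho^\eps$.

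The heart of the proof, and the step I expect to be the main obstacle, is to show, pointwise in $(x,t)$ and with uniform bounds,
$$\eps^{-\alpha}\int_{\RR^N}\nu(x,v)F(v)\,[\chi^\eps(x,v)-\vphi(x)]\,dv \longrightarrow -\kappa_0\,\mathcal L(\vphi)(x).$$
I would insert $\chi^\eps-\vphi=\int_0^\infty\nu(x-\eps v\tau,v)[\vphi(x-\eps v\tau)-\vphi(x)]e^{-\int_0^\tau\nu\,ds}\,d\tau$ and rescale $w=\eps v$: then $dv=\eps^{-N}dw$ and, by (B1), $F(v)\sim\kappa_0\eps^{N+\alpha}|w|^{-N-\alpha}$, so the prefactor is exactly $\eps^\alpha$, while (\ref{eq:nu0def}) replaces every $\nu(\cdot,v)$ by $\nu_0$ in the limit. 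Writing $w=r\omega$ and changing $(r,\tau)\mapsto(l=r\tau,\,r)$ and then $y=x-l\omega$, the $r$-integral becomes $\int_0^\infty r^{-2-\alpha}e^{-A/r}\,dr=A^{-1-\alpha}\int_0^\infty z^\alpha e^{-z}\,dz$ with $A=|x-y|\int_0^1\nu_0((1-s)x+sy)\,ds$, and collecting the powers of $|x-y|$ reproduces exactly the kernel $\gamma(x,y)$. The obstacles here are (i) producing a dominating function for the heavy-tailed integrand, uniformly in $x$, so that dominated convergence applies through the characteristic formula with space-dependent $\nu$ inside the exponential; and (ii) the small-$w$ region, where $\chi^\eps-\vphi=O(\eps)$: were it not for the symmetry $F(-v)=F(v)$, $\nu(x,-v)=\nu(x,v)$ in (A1)--(A2), which kills the odd first-order drift $\na\vphi\cdot\int vF\,dv=0$, this region would dominate when $\alpha\ge1$. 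The same symmetry turns the limiting $w$-integral into a genuine principal value, convergent precisely because $\alpha<2$, which is what yields $\mathcal L$ in its singular-integral form.

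Finally I would assemble the limits. Combining the above with the right-hand side identified in the second paragraph, and choosing test functions with $\vphi(\cdot,T)=0$, the identity becomes
$$\int_0^T\!\!\int_{\RR^N}\rho\,\pa_t\vphi\,dx\,dt + \int_{\RR^N}\rho_0\,\vphi(x,0)\,dx = \kappa_0\int_0^T\!\!\int_{\RR^N}\rho\,\mathcal L(\vphi)\,dx\,dt,$$
which is exactly the weak formulation of $\pa_t\rho+\kappa_0\mathcal L(\rho)=0$ with $\rho(\cdot,0)=\rho_0=\int f_0\,dv$. Here one uses that $\gamma(x,y)=\gamma(y,x)$ (since $\int_0^1\nu_0((1-s)x+sy)\,ds$ is symmetric in $x,y$), so that $\mathcal L$ is self-adjoint and the test-function pairing is indeed the correct weak form, completing the identification of the limit.
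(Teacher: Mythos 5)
Your strategy is exactly the paper's: the $\LF$ a priori estimates, the auxiliary problem $\nu\chi^\eps-\eps v\cdot\na_x\chi^\eps=\nu\vphi$, the weak formulation pairing $f^\eps$ against $K^*(\chi^\eps)-\nu\vphi$, the Cauchy--Schwarz disposal of the $g^\eps$ remainder (your $\int g^\eps\Phi^\eps$ coincides with the paper's $\int K(g^\eps)[\chi^\eps-\vphi]$ because $K^*(\vphi)=\nu\vphi$), and the rescaling of the velocity integral to produce the kernel. Your change of variables ($w=\eps v$, then polar coordinates and $(r,\tau)\mapsto(l,r)$) differs only cosmetically from the paper's single substitution $w=\eps z v$, and it does reproduce $\gamma(x,y)$, since $\int_0^\infty z^\alpha e^{-z\mu}\,dz=\mu^{-1-\alpha}\int_0^\infty z^\alpha e^{-z}\,dz$.

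There is, however, one concrete error. Your characteristic formula integrates backward along $x-\eps v\tau$, and the function
$$\psi^\eps(x,v,t)=\int_0^\infty \nu(x-\eps v\tau,v)\,\vphi(x-\eps v\tau,t)\,e^{\ds-\int_0^\tau \nu(x-\eps vs,v)\,ds}\,d\tau$$
solves $\nu\psi^\eps+\eps v\cdot\na_x\psi^\eps=\nu\vphi$, \emph{not} the stated equation (check with $\nu\equiv 1$: one finds $\eps v\cdot\na_x\psi^\eps=\vphi-\psi^\eps$). The correct solution integrates forward: $\chi^\eps(x,v,t)=\int_0^\infty e^{-\int_0^z\nu(x+\eps vs,v)\,ds}\,\nu(x+\eps vz,v)\,\vphi(x+\eps vz,t)\,dz$. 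This is not a cosmetic point: the identity $L^*(\chi^\eps)+\eps v\cdot\na_x\chi^\eps=K^*(\chi^\eps)-\nu\vphi$, which is the linchpin of your second paragraph, requires $\nu\chi^\eps-\eps v\cdot\na_x\chi^\eps=\nu\vphi$; with your $\psi^\eps$ one gets instead $K^*(\psi^\eps)-2\nu\psi^\eps+\nu\vphi$, and the weak formulation no longer closes. Since $F$ and $\nu$ are even in $v$, one has $\psi^\eps(x,v,t)=\chi^\eps(x,-v,t)$, so every velocity-integrated quantity you compute (in particular the limiting kernel) is unchanged --- which is why your calculation still lands on $\gamma$ --- but the proof is only valid once the sign is flipped everywhere. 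A second, smaller omission: in the small-$w$ region you invoke the evenness of $F$ and $\nu$ in $v$ to kill the first-order drift, but with $x$-dependent $\nu$ the weight $e^{-\int_0^z\nu(x+\eps vs,v)\,ds}\,\nu(x+\eps vz,v)$ is \emph{not} even in $v$, so the cancellation is only approximate. The paper handles this by comparing that weight with the even weight $e^{-z\nu(x,v)}\nu(x,v)$ and controlling the difference by $Ce^{-\nu_1 z}\,\eps|v|(z+z^2)$ via the hypothesis $\|D_x\nu\|_{L^\infty}\le C$ of (B2); this is precisely where the $\mathcal{C}^1$ assumption on $\nu$ enters, and your sketch should include it to make the $O(\eps^{2-\alpha})$ bound on the small-velocity contribution legitimate.
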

In view of (B2), it is readily seen that there exist $\gamma_1$ and $\gamma_2$ such that
$$ 0 < \gamma_1 \leq \gamma(x,y) \leq \gamma_2<\infty . $$
In particular the operator $\mathcal L$ has the same order as the fractional Laplace operator $(-\Delta)^{\alpha/2}$.
It is the fractional equivalent of the divergence form elliptic operator $-\div(D\na\, \cdot\,)$ which is typical of conservation laws.
Note also that $\mathcal L$ is self-adjoint since $\gamma(x,y)=\gamma(y,x)$.

\begin{remark}\label{rem:1}
\item[(i)] As in \cite{MMM}, we could replace Assumption (B1) by  $F(v) = F_0(v) \ell(|v|)$ where $F_0$ satisfies (B1) and $\ell(|v|)$ is a slowly varying function (such as a logarithm, power of a logarithm or iterated logarithm).
This would complicate the proof slightly, but can be handled as in \cite{MMM}.
In particular, as in  \cite{MMM}, the slowly varying function $\ell$  does not affect  the asymptotic equation, but the time scale $\theta(\eps)$ would have to  be adjusted ($\theta(\eps)=\eps^\alpha \ell (\eps^{-1})$ instead of $\theta(\eps)=\eps^\alpha$). 
\item[(ii)] Assumption (B2) is far from necessary, and as in \cite{MMM} macroscopic limits can be derived with degenerate collision frequency satisfying $\nu(x,v)\sim\nu_0(x)|v|^\beta$ as $|v|\to \infty$. In that case the operator $\mathcal L$ depends both on the asymptotic behavior of $F$ and that of $\sigma$. 
A tedious but straightforward adaptation of the proof of Theorem~\ref{thm:2} would yield a similar result with  $\alpha'=\frac{\alpha-\beta}{1-\beta}$ instead of $\alpha$ (see \cite{MMM}). 
In particular, we must  take $\theta(\eps)=\eps^{\alpha'}$ and the operator $\mathcal L$ would be replaced by
$$\mathcal L(\rho)=\frac{1}{1-\beta}\mbox{PV}\int_{\RR^N} \gamma(x,y)\frac{\rho(x)-\rho(y)}{|x-y|^{N+\alpha'}}\, dy.$$
\end{remark}

\subsection{Space dependent equilibrium distribution functions and acceleration field} \label{sec:Fx}
In the results above, we have assumed that the equilibrium distribution function $F$ was independent of the space variable $x$ (even though the cross-section may depend on $x$).
However, one might want to consider equilibrium functions depending on $x$ and satisfying
\begin{equation}\label{eq:Fxx}
|v|^{\alpha+N} F(x,v) \longrightarrow  \kappa_0(x)  \  \mbox{ as } |v|\rightarrow \infty
\end{equation}
instead of (\ref{eq:F0def}) with $\alpha\in (0,2)$ and  $\kappa_0(x)> 0$ in $L^\infty(\RR^N)$.
Another natural question concerns the effects of an external acceleration field $E(x,t)$.
In that case, it turns out that the correct scaling is given by
\begin{equation}\label{eq:kin2} 
\!\!\left\{\!\!\!
\begin{array}{ll}
\eps^\alpha \pa_t f^\eps + \eps \, v\cdot \na_x f^\eps  + \eps^{\alpha-1} E \cdot \nabla_v f^\eps =L(f^\eps) &x\in \RR^N, \, v\in\RR^N,\, t>0 \\[4pt]
f^\eps(x,v,0) = f_0(x,v)\qquad& x\in \RR^N, \, v\in\RR^N
\end{array}
\right.
\end{equation} 
and formally, the method described in this paper easily shows that 
  $f^\eps(x,v,t)$  converges 
to $\rho(x,t) F(x,v)$ with $\rho$ solution of 
\begin{equation}\label{eq:limit1}
\left\{
\begin{array}{l}
\pa_t \rho + \mathcal L( \kappa_0\rho)+\div(E\rho)=0\\[4pt]
\rho(x,0)=\rho_0(x)
\end{array}
\right.\end{equation}
where $\mathcal L$ is the elliptic operator defined in Theorem \ref{thm:2}
and $\rho_0=\int_{\RR^N} f_0(x,v)\, dv$.

However, in that case,  the derivation of the crucial $L^2_{F^{-1}}$ estimates (\ref{eq:rho}) and (\ref{eq:gg}) fails. 
Deriving the appropriate estimates (in $L\log L$) to rigorously justify this limit will be the goal of a forthcoming paper.

\vspace{20pt}

\section{Proof of Theorem \ref{thm:1}}\label{sec:proof1}
In this section, we give a detailed proof of  Theorem~\ref{thm:1}. We recall that the collision operator  is given by (\ref{eq:L0})
where  $F(v)> 0$ satisfies the normalization condition $\int F(v)\, dv=1$ and (\ref{eq:F0}).

This proof is organized as follows:
We first recall the derivation of the required a priori estimates. Then, we introduce the auxiliary problem 
(\ref{eq:aux}) and establish the main properties of the solution of that problem. Step 3 is both the simplest and most interesting step: Using the solution of the auxiliary problem in the weak formulation of (\ref{eq:kin0}) we show how the asymptotic problem arises as $\eps\to0$, at least formally. The final step is devoted to the rigorous justification of the limits needed in Step 3.

\medskip

\paragraph{Step 1: A priori estimates for $f^\eps$. } 
 First of all, the maximum principle and the conservation of mass (consequence of the fact that $\int L(f)\, dv=0$ for all $f$) yield:
$$ ||f^\eps(t)||_{L^1(\RR^N\times\RR^N)} = \int_{\RR^N\times\RR^N} f^\eps(x,v,t)\, dx\, dv = ||f_0||_{L^1(\RR^N\times\RR^N)} \quad \mbox{ for all } t>0.$$
Next, we want to prove that $f^\eps$ converges to a function of the form $\rho(x,t) F(v)$.
The following computations are very classical and we only recall them here for the sake of completeness: We write $ f^\eps=\rho^\eps F+g^\eps$ with $\rho^\eps = \int_{\RR^N} f^\eps\, dv$.
Multiplying (\ref{eq:kin0}) by $f^\eps F^{-1}$ and integrating with respect to $x$ and $v$, we get the following equalities:
\begin{eqnarray*}
\eps^\alpha \, \frac{d}{dt} \int_{\RR^{2N}} \frac{(f^\eps)^2}{2} \, F^{-1} \, dvdx 
&=&  \int_{\RR^{2N}} L(f^\eps) \, f^\eps \, F^{-1}  \, dvdx \\
&=&  \int_{\RR^{2N}} [ (\rho^\eps)^2 \, F - (f^\eps)^2 \, F^{-1} ]  \, dvdx \\
&=& -  \int_{\RR^{2N}} [ f^\eps -  \rho^\eps \, F]^2 \, F^{-1}  \, dvdx, 
\end{eqnarray*}
from which we deduce the two estimates
\begin{equation}\label{bdd:thm01}
\sup_{t \ge 0} \int_{\RR^{2N}} \frac{(f^\eps(t,.))^2}{  F}  \, dvdx \le 
 \int_{\RR^{2N}}  \frac{f_0^2}{ F}  \, dv \, dx =  \| f_0 \|^2_{L^2_{F^{-1}}},
\end{equation}
and 
\begin{equation}\label{bdd:thm02}
\int_0^\infty \int_{\RR^{2N}} [ f^\eps -  \rho^\eps \, F]^2 \, F^{-1}
\, dv \, dx \, dt \le   
\frac{\eps^\alpha}2  \,  \| f_0 \|^2_{L^2_{F^{-1}}}.
\end{equation}
We can thus write
$$f^\eps(x,v,t)=\rho^\eps(x,t) F(v) + g^\eps(x,v,t),$$
with
\begin{equation}\label{eq:g} 
||g^\eps||_{L^2_{F^{-1}}} \leq C \eps^{\alpha/2}.
\end{equation}
Cauchy-Schwarz inequality also gives:
$$
\rho^\eps (t,x) = \int_{\RR^N} {f^\eps \over F^{1/2}}Ê\, F^{1/2} \, dv 
\le \left( \int_{\RR^N} {(f^\eps)^2 \over F}Ê\, dv \right)^{1/2},
$$
so that
\begin{equation}\label{bdd:thm03}
\sup_{t \ge 0} \int_{\RR^{N}}  \rho^\eps (t,.)^2 \, dx \le 
  \| f_0 \|^2_{L^2_{F^{-1}}}. 
\end{equation}

We deduce that  $f^\eps$ converges weakly in
  $L^\infty(0,T;L^2_{F^{-1}}(\RR^N\times\RR^N))$ to a function $\rho(x,t) F(x,v)$
where $\rho$ is the weak limit of $\rho^\eps$ in $L^\infty(0,T;L^2(\RR^N))$.

\vspace{20pt}

\paragraph{Step 2: The auxiliary problem. } 
This is the key step of the proof:
For a test function $\vphi\in\mathcal D(\RR^N\times [0,\infty))$, we introduce $\chi^\eps(x,v,t)\in L^\infty_{t,v}((0,\infty)\times\RR^N; L^2_x(\RR^N))$ solution of 
\begin{equation}\label{eq:aux}
\chi^\eps - \eps v\cdot \na_x \chi^\eps = \vphi(x,t).
\end{equation}
This equation is actually easy to integrate, and we check that  $\chi^\eps(x,v,t)$ is  given by the explicit formula:
\begin{equation}\label{eq:chi1}
\chi^\eps(x,v,t) = \int_0^\infty e^{-z} \vphi(x+\eps v z,t)\, dz.
\end{equation}
The function $\chi^\eps$ is thus smooth and bounded in $L^\infty$. 
Furthermore, we have:
\begin{eqnarray*}
\left| \chi^\eps-\vphi\right| &  = & \left|\int_0^\infty e^{-z} \big[ \vphi(x+\eps v z,t)-\vphi(x,t)\big]\, dz\right|\\
& \leq & ||D\vphi||_{L^\infty} \eps |v| ,
\end{eqnarray*}
and thus
$$ \chi^\eps(x,v,t) \longrightarrow \vphi(x,t) \qquad \mbox{ as } \eps\to 0$$
uniformly with respect to  $x$ and $t$.
However, this convergence is not uniform with respect to $v$, so we will need the following lemma:
\begin{lemma}\label{lem:chilimit}
Let $\vphi\in\mathcal D(\RR^N\times [0,\infty))$, and define $\chi^\eps$  by (\ref{eq:chi1}). Then
$$ \int_{\RR^N} F(v) \big[ \chi^\eps(x,v,t) -\vphi(x,t)\big] \, dv \longrightarrow 0 \quad \mbox{ uniformly w.r.t. $x$ and $t$.}
$$
and
$$ \int_{\RR^N} F(v) \big[ \pa_t\chi^\eps(x,v,t) -\pa_t\vphi(x,t)\big] \, dv \longrightarrow 0 \quad \mbox{ uniformly w.r.t. $x$ and $t$.}
$$
Furthermore,
$$ || \chi^\eps||_{L^2_F(\RR^{2N}\times(0,\infty))} \leq  || \vphi||_{L^2((0,\infty)\times\RR^N)}$$
and 
$$ || \pa_t\chi^\eps||_{L^2_F(\RR^{2N}\times(0,\infty))}\leq  ||\pa_t \vphi||_{L^2((0,\infty)\times\RR^N)}.
$$
\end{lemma}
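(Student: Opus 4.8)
The plan is to treat the two $L^2_F$ bounds and the two convergence statements by separate arguments, exploiting in both cases that $\chi^\eps$ is an average of $\vphi$ against the probability measure $e^{-z}\,dz$ on $(0,\infty)$, together with the translation invariance of Lebesgue measure and the two normalizations $\int_0^\infty e^{-z}\,dz=1$ and $\int_{\RR^N}F(v)\,dv=1$.

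For the norm estimates I would start from the representation (\ref{eq:chi1}) and apply Jensen's inequality (convexity of $s\mapsto s^2$) with respect to the probability measure $e^{-z}\,dz$, giving the pointwise bound $|\chi^\eps(x,v,t)|^2\le\int_0^\infty e^{-z}|\vphi(x+\eps v z,t)|^2\,dz$. Multiplying by $F(v)$, integrating in $(x,v,t)$ and using Fubini, the inner $x$-integral of $|\vphi(x+\eps v z,t)|^2$ equals $\int_{\RR^N}|\vphi(x,t)|^2\,dx$ by the change of variables $x\mapsto x+\eps v z$; the remaining $z$- and $v$-integrals then collapse via the two normalizations, yielding $\|\chi^\eps\|_{L^2_F}\le\|\vphi\|_{L^2}$. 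Since $\pa_t\chi^\eps$ is given by the same formula with $\vphi$ replaced by $\pa_t\vphi\in\mathcal D(\RR^N\times[0,\infty))$, the identical computation gives the bound on $\pa_t\chi^\eps$.

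For the convergence statements, the naive route would be to insert the pointwise Lipschitz bound $|\chi^\eps-\vphi|\le\|D\vphi\|_{L^\infty}\eps|v|$ already derived above. This fails directly because, under (\ref{eq:Fa}) with $\alpha\in(0,2)$, the first moment $\int_{\RR^N}F(v)|v|\,dv$ may be infinite (it diverges whenever $\alpha\le 1$), so $\eps\int F|v|\,dv$ is not a legitimate bound. I expect this to be the main obstacle, and the remedy is a velocity splitting at an $\eps$-dependent threshold $M(\eps)$ chosen so that $M(\eps)\to\infty$ while $\eps M(\eps)\to 0$ (for instance $M(\eps)=\eps^{-1/2}$). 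On $\{|v|\le M(\eps)\}$ I use the Lipschitz bound to control $\int_{|v|\le M}F(v)|\chi^\eps-\vphi|\,dv\le\|D\vphi\|_{L^\infty}\,\eps M(\eps)$, which tends to $0$; on $\{|v|>M(\eps)\}$ I use instead the crude bound $|\chi^\eps-\vphi|\le 2\|\vphi\|_{L^\infty}$ together with the integrability $F\in L^1$, so that $\int_{|v|>M}F(v)|\chi^\eps-\vphi|\,dv\le 2\|\vphi\|_{L^\infty}\int_{|v|>M(\eps)}F(v)\,dv\to 0$ as the tail of $F$ vanishes. Adding the two contributions gives the first convergence statement, and the second follows verbatim with $\pa_t\vphi$ in place of $\vphi$.

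Finally, I would note that every bound appearing above involves only the sup-norms $\|\vphi\|_{L^\infty}$, $\|D\vphi\|_{L^\infty}$, $\|\pa_t\vphi\|_{L^\infty}$, $\|D\pa_t\vphi\|_{L^\infty}$ and the tail $\int_{|v|>M(\eps)}F\,dv$, none of which depend on $x$ or $t$; the convergence is therefore automatically uniform in $x$ and $t$, as claimed.
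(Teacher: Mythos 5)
Your proof is correct and follows essentially the same route as the paper: Jensen's inequality with respect to the probability measure $e^{-z}\,dz$ plus translation invariance in $x$ for the $L^2_F$ bounds, and a velocity splitting (Lipschitz bound for $|v|\leq M$, tail integrability of $F$ for $|v|\geq M$) for the uniform convergence. The only cosmetic difference is that you take an explicit $\eps$-dependent threshold $M(\eps)=\eps^{-1/2}$, whereas the paper fixes $\delta$, chooses $M$ with $\int_{|v|\geq M}F<\delta$, and then takes $\eps M<\delta$; these are interchangeable formulations of the same argument, and you correctly identify the divergence of $\int F(v)|v|\,dv$ for $\alpha\leq 1$ as the reason the splitting is needed.
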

The space  $L^2_F((0,\infty)\times\RR^{2N})$ denotes the weighted $L^2$ space with weight $F(v)$.
 
\begin{proof}[Proof of Lemma \ref{lem:chilimit}]
First of all, we write
$$ \int_{\RR^N} F(v) \big[\chi^\eps(x,v,t) -\vphi(x,t)\big] \, dv  = \int_{\RR^N} F(v) \int_0^\infty e^{-z} \big[\vphi(x+\eps v z,t) -\vphi(x,t)\big]\, dz \, dv.$$
For some large $M$, we then decompose this integral as follows:
\begin{eqnarray*}
&&\left| \int_{\RR^N} F(v) \int_0^\infty e^{-z} \big[\vphi(x+\eps v z,t) -\vphi(x,t)\big]\, dz \, dv \right|\\
&&\qquad\qquad\qquad\quad\quad\quad \leq||D\vphi||_{L^\infty}  \int_{|v|\leq M} F(v) \int_0^\infty e^{-z} \eps |v| z\, dz \, dv\\
&&\qquad\qquad\qquad\quad\quad\quad\qquad\qquad+ 2||\vphi||_{L^\infty} \int_{|v|\geq M} F(v) \int_0^\infty e^{-z} \, dz \, dv\\
&&\quad\qquad\qquad\qquad\quad\quad \leq ||D\vphi||_{L^\infty}  \eps M  +  2||\vphi||_{L^\infty}  \int_{|v|\geq M} F(v) \, dv.
\end{eqnarray*}
Since $F$ is integrable with respect to $v$, it is readily seen that for any $\delta>0$, we can choose $M$ such that $ \int_{|v|\geq M} F(v) <\delta$ and then choose $\eps$ so that $\eps M<\delta$ and deduce that for $\eps$ small enough we have 
$$\left| \int_{\RR^N} F(v) \big[\chi^\eps(x,v,t) -\vphi(x,t)\big] \, dv\right| \leq C\delta$$
where the constant $C$ only depends on $\vphi$.
The first limit in Lemma \ref{lem:chilimit} follows. The other limit can be proved similarly (note that $t$ is only a parameter here).
\medskip

In order to prove the $L^2_F$ estimates, we note that $ \int_0^\infty e^{-z} \, dz=1$ and so 
$$ |\chi^\eps|^2 \leq \int_0^\infty e^{-z} \vphi(x+\eps v z,t)^2\, dz.$$
We deduce: 
\begin{eqnarray*}
|| \chi^\eps||^2_{L^2_F(\RR^{2N}\times(0,\infty))} &= & \int_0^\infty \int_{\RR^{2N}}F |\chi^\eps|^2\, dx\, dv\, dt \\
& \leq & \int_0^\infty\int_{\RR^{2N}} F \int_0^\infty e^{-z} \vphi(x+\eps v z,t)^2\, dz \, dx\, dv\, dt \\
&\leq &  \int_{\RR^{N}}  F(v) \int_0^\infty e^{-z} || \vphi||^2_{L^2(\RR^{N}\times(0,\infty))}\, dz \, dv \\
&\leq &   || \vphi||^2_{L^2(\RR^{N}\times(0,\infty))}.
\end{eqnarray*}
The last inequality is proved similarly.
\end{proof}

\paragraph{Step 3: Weak formulation and formal passage to the limit. } 
Multiplying (\ref{eq:kin0}) by $\chi^\eps$ and integrating with respect to $x,v,t$, we get:
\begin{eqnarray*}
&&\!\!\!\!\!\!\! \!\!\!\!\!\!\!\!\!\! \!\!\!  - \eps^\alpha \int_0^\infty\int_{\RR^{2N}} 
  f^\eps \pa_t \chi^\eps \, dx\, dv\,dt - \eps^\alpha \int _{\RR^{2N}} 
  f_0(x,v) \chi^\eps(x,v,0) \, dx\, dv\\
&&\quad\qquad\qquad = \int_0^\infty\int_{\RR^{2N}} \rho^\eps F \chi^\eps - f^\eps \chi^\eps +f^\eps \eps v\cdot\na_x\chi^\eps\, dx\, dv\,dt
\end{eqnarray*}
which, using (\ref{eq:aux}), yields:
\begin{eqnarray}
&&\!\!\!\!\! \!\!\!\!\!  - \int _0^\infty\int_{\RR^{2N}} 
  f^\eps \pa_t \chi^\eps \, dx\, dv\,dt - \int_{\RR^{2N}}  
  f_0(x,v) \chi^\eps(x,v,0) \, dx\, dv \nonumber \\
 &&\quad\qquad \quad\quad\quad= \eps^{-\alpha} \int_0^\infty\int_{\RR^{2N}}  \rho^\eps F \chi^\eps - f^\eps \vphi(x,t)\, dx\, dv\,dt\nonumber \\
 &&\quad\qquad \quad\quad\quad =  \eps^{-\alpha}  \int_0^\infty\int_{\RR^{2N}}  \rho^\eps F \chi^\eps \, dx\, dv\,dt -  \int \rho^\eps \vphi(x,t)\, dx\,dt. \nonumber
\end{eqnarray}
We deduce:
\begin{eqnarray}
&&\!\!\!\!\! \!\!\!\!\!  - \int _0^\infty\int_{\RR^{2N}} 
  f^\eps \pa_t \chi^\eps \, dx\, dv\,dt - \int_{\RR^{2N}}  
  f_0(x,v) \chi^\eps(x,v,0) \, dx\, dv  \nonumber \\
   &&\quad\quad\quad = \eps^{-\alpha} \int_0^\infty\int_{\RR^{N}}  \rho^\eps \int_{\RR^N} F(v) \left[ \chi^\eps(x,v,t)  - \vphi(x,t)\right]\, dv \,  dx\,dt \nonumber
\end{eqnarray}
or
\begin{eqnarray}
&&\!\!\!\!\! \!\!\!\!\!  - \int _0^\infty\int_{\RR^{2N}} 
  f^\eps \pa_t \chi^\eps \, dx\, dv\,dt - \int_{\RR^{2N}}  
  f_0(x,v) \chi^\eps(x,v,0) \, dx\, dv  \nonumber \\
   &&\qquad\qquad\qquad\qquad\qquad\qquad\qquad =  \int_0^\infty\int_{\RR^{N}}  \rho^\eps  \mathcal L^\eps (\vphi) dx\,dt \label{eq:weak1}
\end{eqnarray}
with
$$ \mathcal L^\eps (\vphi)=\eps^{-\alpha} \int_{\RR^N} F(v) \left[ \chi^\eps(x,v,t)  - \vphi(x,t)\right]\, dv.$$
The rest of the proof of Theorem \ref{thm:1} consists in passing to the limit $\eps\rightarrow 0$ in 
 (\ref{eq:weak1}).
Formally, we immediately see that the left hand side should converge to 
$$ - \int_0^\infty\int_{\RR^{N}}  \rho \, \pa_t \vphi \, dx\,dt  - \int _{\RR^{N}}  
  \rho_0(x) \vphi(x,0) \, dx$$
 (this will actually be a consequence of Lemma \ref{lem:chilimit}).
Passing to the limit in the right hand side of (\ref{eq:weak1}) is  the most interesting part of the proof 
since the nonlocal operator should now appear in the limit of $\mathcal L^\eps$: Using formula (\ref{eq:chi1}) for $\chi^\eps$, we get:
$$
 \mathcal L^\eps (\vphi) = \eps^{-\alpha} \int_{\RR^N}  \int_0^\infty e^{-z}  F(v)[ \vphi(x+\eps v z,t)-\vphi(x,t)]\, dz\, dv.
$$  
The change of variable $w=v\eps z$ and (\ref{eq:Fa}) then yields (formally)
\begin{eqnarray}
\mathcal L^\eps (\vphi)& = &   \eps^{-\alpha} \int_{\RR^N}  \int_0^\infty \frac{1}{|\eps z|^N} e^{-z}  F(w/(\eps z))[ \vphi(x+w,t)-\vphi(x,t)]\, dz\, dw\nonumber \\
 & \sim &  \eps^{-\alpha} \int_{\RR^N}  \int_0^\infty \frac{1}{(\eps z)^N} e^{-z}  \frac{(\eps z)^{N+\alpha}}{|w|^{N+\alpha}} [ \vphi(x+w,t)-\vphi(x,t)]\, dz\, dw\nonumber \\
 & \sim &  \int_{\RR^N}  \int_0^\infty e^{-z}  z^\alpha \frac{1}{|w|^{N+\alpha}} [ \vphi(x+w,t)-\vphi(x,t)]\, dz\, dw\nonumber\\
 & = &- \kappa (-\Delta)^{\alpha/2}\vphi.\label{eq:formal}
\end{eqnarray}  
So we should expect the right hand side of (\ref{eq:weak1}) to converge to 
$$ - \kappa \int_0^\infty\int_{\RR^{N}}  \rho \, (-\Delta)^{\alpha/2}\vphi \,  dx\,dt$$
and we recover the weak formulation of (\ref{eq:limit0}) as the limit of (\ref{eq:weak1}).
\smallskip

Of course, those are just formal computations, so the next and final step consists in rigorously justifying those limits. 
Note that we will have to be particularly careful with the right hand side, since the limit of $\mathcal L^\eps (\vphi)$ involves a singular integral which can only be defined as a principal value (see Proposition~\ref{prop:1}).

\medskip

\paragraph{Step 4: Rigorous passage to the limit in (\ref{eq:weak1}). } 
First, we justify  the  limit in the left hand side of (\ref{eq:weak1}).  We note that
\begin{eqnarray*} 
&& \!\!\!\!\! - \int_0^\infty\int_{\RR^{2N}}  
  f^\eps \pa_t \chi^\eps \, dx\, dv\,dt \\
  && \qquad\qquad =- \int_0^\infty\int_{\RR^{2N}}  
  \rho^\eps F \pa_t \chi^\eps \, dx\, dv\,dt- \int_0^\infty\int_{\RR^{2N}}  
  g^\eps \pa_t \chi^\eps \, dx\, dv\,dt.
\end{eqnarray*}
The second term is obviously bounded by 
$$ || g^\eps||_{L^2_{F^{-1}}((0,\infty)\times\RR^{2N})} || \pa_t\chi^\eps||_{L^2_F((0,\infty)\times\RR^{2N})}
 \leq C \eps^{\alpha/2}
$$
(where we use (\ref{eq:g}) and Lemma \ref{lem:chilimit}). 
The first term can be written as
$$ \int_0^\infty\int_{\RR^{N}} \rho^\eps(x,t) \int_{\RR^N} F(v) \pa_t \chi^\eps \, dv\, dx\,dt
$$
which converges, using Lemma \ref{lem:chilimit}, the weak convergence of $\rho_\eps$ in $L^\infty(0,\infty;L^2(\RR^N))$  and  the fact that $\rho_\eps$ is bounded in $L^\infty(0,\infty;L^1(\RR^n))$, to 
$$ \int_0^\infty\int_{\RR^{N}}  \rho (x,t) \int_{\RR^N} F(v) \pa_t \vphi \, dv\, dx\,dt= \int_0^\infty\int_{\RR^{N}}  \rho (x,t) \pa_t \vphi\, dx\,dt.
$$
Proceeding similarly with the initial data term,  we deduce that the left hand side of (\ref{eq:weak1}) converges to
$$ - \int_0^\infty\int_{\RR^{N}}  \rho \, \pa_t \vphi \, dx\,dt  - \int _{\RR^{N}}  
  \rho_0(x) \vphi(x,0) \, dx.$$
In order to pass to the limit in the right hand side of (\ref{eq:weak1}),  we need the following lemma, which is the rigorous justification of (\ref{eq:formal}):
\begin{proposition}\label{prop:1}
Assume that the conditions of Theorem \ref{thm:1} hold  and that $\chi^\eps$ is defined by (\ref{eq:chi1}). Then
$$ \mathcal L^\eps (\vphi):= \eps^{-\alpha}\int_{\RR^N}  F(v) \left[ \chi^\eps(x,v,t) - \vphi(x,t)\right]\, dv$$
converges as $\eps$ goes to zero to
$$- \kappa (-\Delta)^{\alpha/2} (\vphi) = \kappa\, c_{N,\alpha}\, \mbox{PV}\int_{\RR^N} \frac{\vphi(y)-\vphi(x)}{|y-x|^{N+\alpha}} \, dy  $$
where
$$ \kappa =\frac{\kappa_0}{c_{N,\alpha}} \int_0^\infty    |z|^{\alpha} e^{-z}\, dz.$$
Furthermore, the convergence is uniform with respect to $x$ and $t$.
\end{proposition}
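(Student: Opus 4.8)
The plan is to turn $\mathcal L^\eps$ into a genuine singular integral operator by the change of variables $w=\eps z v$, and then to pass to the limit by dominated convergence. Using (\ref{eq:chi1}) and Fubini's theorem (the double integral is absolutely convergent, since $|\vphi(x+\eps vz,t)-\vphi(x,t)|\le \min(2\|\vphi\|_\infty,\,\eps\|D\vphi\|_\infty|v|z)$ and $F\in L^1$), I would first write
$$ \mathcal L^\eps(\vphi)=\int_{\RR^N} G^\eps(w)\,[\vphi(x+w,t)-\vphi(x,t)]\, dw, \qquad G^\eps(w):=\eps^{-\alpha}\int_0^\infty e^{-z}(\eps z)^{-N}F\Big(\frac{w}{\eps z}\Big)\,dz. $$
Because $F$ is even, $G^\eps$ is even as well, a fact that will be essential below.

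Next I would identify the limiting kernel. Writing the integrand of $G^\eps$ as $e^{-z}z^\alpha|w|^{-(N+\alpha)}\big[|w/(\eps z)|^{N+\alpha}F(w/(\eps z))\big]$ and invoking (\ref{eq:Fa}), the bracket converges to $\kappa_0$ for each fixed $z>0$ and $w\neq 0$, while remaining bounded by $C_0:=\sup_{v}|v|^{N+\alpha}F(v)<\infty$ (finite since $F\in L^\infty$ and (\ref{eq:Fa}) holds). Dominated convergence in $z$ then yields, for every $w\neq0$,
$$ G^\eps(w)\longrightarrow G(w):=\frac{\kappa_0}{|w|^{N+\alpha}}\int_0^\infty z^\alpha e^{-z}\,dz. $$
The same pointwise estimate gives $F(v)\le C(1+|v|)^{-(N+\alpha)}$, whence, using $\eps z+|w|\ge|w|$, the bound $G^\eps(w)\le C\int_0^\infty e^{-z}z^\alpha(\eps z+|w|)^{-(N+\alpha)}\,dz\le C|w|^{-(N+\alpha)}$, uniformly in $\eps$.

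The crucial point, and the main obstacle, is that the limiting kernel $G$ is not integrable near $w=0$ against a first-order difference (one has $G(w)|w|\sim|w|^{1-N-\alpha}$, which fails to be integrable once $\alpha\ge1$), so the limit exists only as a principal value and the uniform bound on $G^\eps$ cannot be used directly at the origin. I would resolve this by symmetrizing: since $G^\eps$ is even,
$$ \mathcal L^\eps(\vphi)=\frac12\int_{\RR^N} G^\eps(w)\,\big[\vphi(x+w,t)+\vphi(x-w,t)-2\vphi(x,t)\big]\,dw, $$
and now the second difference is $O(|w|^2)$ near the origin. Combined with $G^\eps(w)\le C|w|^{-(N+\alpha)}$, the integrand is dominated, uniformly in $\eps$, by $C\min(|w|^{2-N-\alpha},|w|^{-N-\alpha})$, which is integrable precisely because $0<\alpha<2$. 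Dominated convergence then gives
$$ \mathcal L^\eps(\vphi)\longrightarrow \frac12\int_{\RR^N}G(w)\big[\vphi(x+w,t)+\vphi(x-w,t)-2\vphi(x,t)\big]\,dw=\kappa_0\Big(\int_0^\infty z^\alpha e^{-z}\,dz\Big)\mathrm{PV}\!\int_{\RR^N}\frac{\vphi(x+w)-\vphi(x)}{|w|^{N+\alpha}}\,dw, $$
which by the representation (\ref{eq:def}) is exactly $-\kappa(-\Delta)^{\alpha/2}\vphi$ with $\kappa=\frac{\kappa_0}{c_{N,\alpha}}\int_0^\infty z^\alpha e^{-z}\,dz$.

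Finally, to obtain convergence uniform in $x$ and $t$, I would split the $w$-integral into the regions $|w|<\delta$, $\delta\le|w|\le R$ and $|w|>R$. On $|w|<\delta$ the symmetrized integrands for both $\mathcal L^\eps(\vphi)$ and its limit are controlled by $C\|D^2\vphi\|_\infty\int_{|w|<\delta}|w|^{2-N-\alpha}\,dw=C\delta^{2-\alpha}$, and on $|w|>R$ by $C\|\vphi\|_\infty R^{-\alpha}$, both uniformly in $\eps,x,t$; these are made arbitrarily small by taking $\delta$ small and $R$ large. On the remaining annulus the convergence $G^\eps\to G$ is actually uniform in $w$ (for $z$ in a fixed window $[0,Z]$ one has $|w|/(\eps z)\ge\delta/(\eps Z)\to\infty$, so the bracket tends to $\kappa_0$ uniformly in $|w|\ge\delta$, while the tail $z>Z$ is absorbed by $e^{-z}$), so the corresponding contribution tends to $0$ uniformly in $x,t$. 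Since $\vphi\in\mathcal D$ makes all the constants above independent of $x$ and $t$, this yields the asserted uniform convergence.
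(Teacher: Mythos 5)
Your proof is correct, but it is organized quite differently from the paper's. The paper first splits the velocity integral into $|v|\le C$ and $|v|\ge C$: on the bounded region it uses the evenness of $F$ to subtract $\eps z v\cdot\na\vphi(x,t)$ and Taylor-expand, producing an $O(\eps^{2-\alpha})$ error (its $I_1$); on the far region it replaces $F$ by the exact power law $\kappa_0|v|^{-N-\alpha}$ (the main term $I_2$) plus an error $I_3$ that is made small by choosing $C$ large; then, after the change of variables $w=\eps z v$ in $I_2$, it regularizes the principal value by decomposing into $|w|\ge 1$, the gradient-corrected region $C\eps z\le |w|\le 1$, and the tail $z\ge 1/(C\eps)$. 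You instead perform the change of variables globally, identify the rescaled kernel $G^\eps$ and its pointwise limit by dominated convergence in $z$ (which absorbs the paper's $I_3$ step into the bound $\sup_v|v|^{N+\alpha}F(v)<\infty$), and --- this is the main structural difference --- regularize the singularity by symmetrizing into second differences rather than by subtracting the gradient term. Both devices exploit the same hypothesis $F(-v)=F(v)$ and are equivalent ways of giving meaning to the principal value; your version yields a more compact argument, with the single dominating function $C\min\bigl(|w|^{2-N-\alpha},|w|^{-N-\alpha}\bigr)$ handling the singularity and the tail at once, and with the three-region splitting needed only for the uniformity statement. What the paper's less economical route buys is that it generalizes: in Proposition 4.4 (the analogue of this result used for Theorem 2.2) the kernel $p^\eps(x,v,z)=e^{-\int_0^z\nu(x+\eps vs,v)\,ds}\,\nu(x+\eps v z,v)$ is no longer even in $v$ --- the paper points this out explicitly --- so your symmetrization is unavailable there, and one must instead compare with a frozen symmetric kernel and Taylor-expand, which is precisely the structure the paper already sets up in this simpler case.
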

Proposition \ref{prop:1}, the weak convergence of $\rho^\eps$ and the fact that $\rho^\eps$ is bounded in $L^\infty(0,\infty;L^1(\RR^n))$ allow us to pass to the limit in the right hand side of (\ref{eq:weak1}).
We deduce:
$$ - \int \rho \, \pa_t \vphi \, dx\,dt  - \int 
  \rho_0(x) \vphi(x,0) \, dx= - \int \rho   \frac{\kappa}{c_{n,\alpha}} (-\Delta)^{\alpha/2} \vphi\, dx\,dt$$
which is the weak formulation of (\ref{eq:limit0}).
This completes the proof of Theorem \ref{thm:1}, and it only remains to prove Proposition \ref{prop:1}.

\vspace{15pt}

\begin{proof}[Proof of Proposition \ref{prop:1}]
We fix $C>0$ (to be chosen later). Then, using (\ref{eq:chi1}), we write:
\begin{eqnarray*} 
\mathcal L^\eps (\vphi)&= & \eps^{-\alpha} \int_{\RR^N}  F(v) \left[ \chi^\eps (x,v,t) - \vphi(x,t)\right]\, dv \\
&= &  \eps^{-\alpha} \int_{\RR^N}  \int_0^\infty e^{-z}  F(v)[ \vphi(x+\eps v z,t)-\vphi(x,t)]\, dz\, dv\\
&= &  \eps^{-\alpha} \int_{|v|\leq C}  \int_0^\infty e^{-z} F(v)  [ \vphi(x+\eps v z,t)-\vphi(x,t)]\, dz\, dv\\
& &  +
\eps^{-\alpha}\int_{|v|\geq C}  \int_0^\infty e^{-z} \frac{\kappa_0 }{|v|^{N+\alpha}}[ \vphi(x+\eps v z,t)-\vphi(x,t)]\, dz\, dv\\
& &  +
\eps^{-\alpha}\int_{|v|\geq C}  \int_0^\infty e^{-z} \left[ F(v) - \frac{\kappa_0 }{|v|^{N+\alpha}}\right] [ \vphi(x+\eps v z,t)-\vphi(x,t)]\, dz\, dv\\
& =& I_1+I_2+I_3
\end{eqnarray*}
Using the fact that $F(-v)=F(v)$, we can write
\begin{eqnarray}
| I_1| & =  &  \left| \eps^{-\alpha} \int_{|v|\leq C}  \int_0^\infty e^{-z} F(v)  [ \vphi(x+\eps v z,t)-\vphi(x,t)- \eps z v\cdot \na\vphi(x,t)]\, dz\, dv\right| \nonumber \\
& \leq &  ||F||_{L^\infty} \eps^{-\alpha} \int_{|v|\leq C}  \int_0^\infty e^{-z} \left| \vphi(x+\eps v z,t)-\vphi(x,t)- \eps z v\cdot \na\vphi(x,t)\right| \, dz \, dv\nonumber\\
& \leq &  ||F||_{L^\infty} ||D^2\vphi||_{L^\infty} \eps^{-\alpha} \int_{|v|\leq C}  \int_0^\infty e^{-z} |\eps  z v|^2 \, dz \, dv\nonumber\\
& \leq & ||F||_{L^\infty} ||D^2\vphi||_{L^\infty} C^2 \eps^{2-\alpha} \label{eq:I1}
\end{eqnarray}
which goes to zero as $\eps$ goes to zero since $\alpha<2$.

Next, a simple change of variable $w=\eps z v$ gives
\begin{eqnarray}
 I_2 & =  & 
\eps^{-\alpha} \int_0^\infty \int_{|w|\geq C\eps z}  e^{-z} |\eps z|^{N+\alpha} \frac{\kappa_0 }{|w|^{N+\alpha}}[ \vphi(x+w,t)-\vphi(x)]\frac{1}{|\eps z|^N}\, dw\, dz \nonumber\\
& = & \kappa_0  \int_0^\infty    |z|^{\alpha} e^{-z}\int_{|w|\geq C\eps z} \frac{ \vphi(x+w,t)-\vphi(x,t)}{|w|^{N+\alpha}} \, dw\, dz. \label{eq:I2}
\end{eqnarray}
We note that by the very definition of Cauchy Principal Value, we have, for every $z>0$, 
\begin{eqnarray*}
\lim_{\eps\to 0} \int_{|w|\geq C\eps z} \frac{ \vphi(x+w,t)-\vphi(x,t)}{|w|^{N+\alpha}} \, dw & = &   \mbox{PV}\int_{\RR^N} \frac{\vphi(x+w,t)-\vphi(x,t)}{|w|^{N+\alpha}} \, dw\\
& = &-(-\Delta)^{\alpha/2} \vphi.
\end{eqnarray*}
However, this limit is not obviously  uniform with respect to $z$ or $x$, so we have to work a little bit more in order to pass to the limit in (\ref{eq:I2}).
\vspace{10pt}

In order to prove the convergence of $I_2$, we recall that the fractional Laplace operator  can also classically be written as
\begin{eqnarray*}
-(-\Delta)^{\alpha/2} \vphi
& = & 
  \mbox{PV}\int_{\RR^N}[\vphi(x+w)-\vphi(x)]\frac{dw}{|w|^{N+\alpha}} \\
& = & \int_{|w|\geq 1}[\vphi(x+w)-\vphi(x)]\frac{dw}{|w|^{N+\alpha}} \\
&& +\int_{|w|\leq 1}[\vphi(x+w)-\vphi(x)-\na\vphi(x)\cdot w]\frac{dw}{|w|^{N+\alpha}} 
\end{eqnarray*}
where all the integrals are now defined in the usual sense (no principal values).

Using a similar decomposition in (\ref{eq:I2}), we get:
\begin{eqnarray*}
I_2 & =& \kappa_0  \int_0^{1/(C\eps)}   |z|^{\alpha} e^{-z}\int_{|w|\geq 1}[ \vphi(x+w,t)-\vphi(x,t)] \frac{ dw}{|w|^{N+\alpha}}\, dz\\
&&+ \kappa_0  \int_0^{1/(C\eps)}   |z|^{\alpha} e^{-z}\int_{C\eps z\leq |w|\leq 1}[ \vphi(x+w,t)-\vphi(x,t)-\na\vphi(x)\cdot w] \frac{ dw}{|w|^{N+\alpha}}\, dz\\
&& + \kappa_0  \int_{1/(C\eps)}^\infty   |z|^{\alpha} e^{-z}\int_{|w|\geq C\eps z}[ \vphi(x+w,t)-\vphi(x,t)] \frac{ dw}{|w|^{N+\alpha}}\, dz
\end{eqnarray*}
(note that we need to split the $z$ integral to account for the case when $C\eps z>1$).
The last term can be bounded by
\begin{eqnarray*}
&& 2||\vphi||_{L^\infty} \int_{1/(C\eps)}^\infty   |z|^{\alpha} e^{-z}\, dz \int_{|w|\geq 1}\frac{ dw}{|w|^{N+\alpha}}
\end{eqnarray*}
which goes to zero as $\eps$ goes to zero (uniformly with respect to $x$ and $t$),
while the first two terms converge, uniformly with respect to $x$ and $t$, respectively to
\begin{eqnarray*}
&& \kappa_0  \int_0^\infty   |z|^{\alpha} e^{-z}\int_{|w|\geq 1}[ \vphi(x+w,t)-\vphi(x,t)] \frac{ dw}{|w|^{N+\alpha}}\, dz\\
&&\qquad\qquad\qquad\qquad = \kappa\, c_{N,\alpha}\int_{|w|\geq 1}[ \vphi(x+w,t)-\vphi(x,t)] \frac{ dw}{|w|^{N+\alpha}}
\end{eqnarray*}
and
\begin{eqnarray*}
&&\kappa_0  \int_0^\infty   |z|^{\alpha} e^{-z}\int_{ |w|\leq 1}[ \vphi(x+w,t)-\vphi(x,t)-\na\vphi(x)\cdot w] \frac{ dw}{|w|^{N+\alpha}}\, dz.\\
&&\qquad\qquad\qquad\qquad = \kappa\, c_{N,\alpha}\int_{ |w|\leq 1}[ \vphi(x+w,t)-\vphi(x,t)-\na\vphi(x)\cdot w] \frac{ dw}{|w|^{N+\alpha}}.
\end{eqnarray*}
(Note in particular that the integrand in the second term is bounded by $\frac{C ||D^2\vphi||_{L^\infty}}{|w|^{N+\alpha-2}})$ which is integrable at $w=0$).

We deduce:
$$\lim_{\eps\to 0}  I_2(x,t) = -\kappa\, (-\Delta)^{\alpha/2} \vphi(x,t) $$
and the convergence is uniform with respect to $x$ and $t$.
\medskip

Finally, we show that $I_3$ can be made as small as we want: For any $\delta>0$, we can choose $C$ large enough so that
$$\left| F(v) - \frac{\kappa_0 }{|v|^{N+\alpha}}\right| \leq \frac{\delta}{|v|^{N+\alpha}} \mbox{ for all } |v|\geq C.
$$
(note that $C$ was arbitrary up to now).
Proceeding in the same way as with $I_2$, we can thus show:
$$ \limsup_{\eps\to 0 } |I_3|\leq \delta |C(\vphi)|
.$$
Since this holds for any $\delta>0$, the proposition follows.
\end{proof}

\section{Proof of Theorem \ref{thm:2}}\label{sec:proof2}

The proof of Theorem \ref{thm:2} is very similar to that of Theorem \ref{thm:1}, so we will only present in detail the steps that are significantly different.
We recall that $f^\eps$ solves (\ref{eq:kin0}) with $\theta(\eps)=\eps^\alpha$.

We start with recalling the following classical a priori estimates (see Appendix \ref{app:1} for the proof):
\begin{lemma}\label{lem:bound}
  The solution $f^\eps$ of (\ref{eq:kin0}) is bounded in
  $L^\infty(0,\infty, L^1(\RR^{2N}))$ and $L^\infty(0,\infty;L^2_{F^{-1}}(\RR^{2N}))$ uniformly with respect
  to $\eps$.  Furthermore, it satisfies:
  $$ f^\eps = \rho^\eps F(v) + g^\eps, $$
  where the density $ \rho^\eps = \int_{\RR^N} f^\eps\, dv$ and the
  function $g^\eps$ are such that
\begin{equation}\label{eq:rho} 
\|\rho^\eps\|_{L^\infty(0,\infty,L^2(\RR^N) )} \leq \| f_0\|_{L^2_{F^{-1}}} 
\end{equation}
and
\begin{equation}\label{eq:gg} 
\|g^\eps\|_{L^2(0,\infty;L^2_{F^{-1}}(\RR^{2N})} \leq C \, \|f_0\|_{L^2_{F^{-1}}} \,\eps^{\alpha/2}.
\end{equation}

\item In particular  $\rho^\eps$ converges $L^\infty(0,T;L^2)$-weak to $\rho$, and 
$f^\eps$ converges  $L^\infty(0,T;L^2_{F^{-1}})$-weak to  $f=\rho(x,t)F(v)$.
\end{lemma}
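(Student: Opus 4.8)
The plan is to reproduce the three ingredients behind Step~1 of the proof of Theorem~\ref{thm:1}, now adapted to the genuinely $v$- and $x$-dependent operator $L=K-\nu f$. The $L^1$ and positivity bounds are the soft part: the equation preserves positivity (maximum principle), and $\int_{\RR^N}L(f)\,dv=0$ for every $f$ since, by Fubini and the definition of $\nu$, the gain and loss terms both integrate to $\int_{\RR^N}\nu(x,v)f(v)\,dv$ and cancel. Hence mass is conserved, $\|f^\eps(t)\|_{L^1(\RR^{2N})}=\|f_0\|_{L^1(\RR^{2N})}$ for all $t$, which gives the $L^\infty(0,\infty;L^1)$ bound. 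Well-posedness and the maximum principle are classical under (A1)--(A2) (see \cite{DGP}).

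Next I would derive the energy estimate by multiplying (\ref{eq:kin0}) by $f^\eps/F$ and integrating in $x,v$. Because $F=F(v)$ is independent of $x$, the transport term is an exact $x$-divergence, $\eps\,v\cdot\na_x f^\eps\,(f^\eps/F)=\eps\,\na_x\!\cdot\!\big(v\,(f^\eps)^2/2F\big)$, and integrates to zero; therefore
$$\frac{\eps^\alpha}{2}\,\frac{d}{dt}\,\|f^\eps(t)\|_{\LF}^2=\int_{\RR^{2N}}L(f^\eps)\,\frac{f^\eps}{F}\,dv\,dx.$$
Everything then hinges on a coercivity (spectral gap) estimate for the right-hand side.

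The core of the argument --- and the step I expect to be the main obstacle, since we assume only the general balance principle (\ref{def:F}) and \emph{not} micro-reversibility --- is to prove, for $g:=f-\rho F$ with $\rho:=\int_{\RR^N}f\,dv$,
$$-\int_{\RR^N}L(f)\,\frac{f}{F}\,dv\ \ge\ \nu_1\int_{\RR^N}\frac{g^2}{F}\,dv.$$
I would proceed in two moves. First, using $K(F)=\nu F$ (which is exactly (\ref{def:F})) together with $\int K(g)\,dv=\int\nu g\,dv$, a direct expansion of $f=\rho F+g$ shows that all $\rho$-terms cancel and the dissipation reduces to the purely microscopic quantity $-\int L(g)(g/F)\,dv=\int\nu\,(g^2/F)\,dv-\int K(g)(g/F)\,dv$, where $\int g\,dv=0$. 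Second, I would split the kernel as $\sigma=\nu_1 F(v)+\tilde\sigma$ with $\tilde\sigma\ge0$ by the lower bound in (B2): the ``$\nu_1F$'' part contributes $\nu_1\int g^2/F\,dv$ minus $\nu_1(\int g\,dv)^2=0$, while the remainder operator $\tilde L$ still satisfies $\tilde L(F)=0$ (it equals $L$ minus an operator that kills $F$). A weighted Cauchy--Schwarz inequality, splitting $\tilde\sigma\,g(v')\,g(v)/F(v)$ symmetrically and using both $\int\tilde\sigma(x,v,v')F(v')\,dv'=\tilde\nu(x,v)F(v)$ and the definition of $\tilde\nu$, then shows the remainder contribution $\int\tilde\nu\,g^2/F\,dv-\int\tilde K(g)(g/F)\,dv$ is nonnegative. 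Combining the two moves yields the claimed gap with constant $\nu_1$.

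Finally I would integrate the energy identity in time. The sign of the dissipation gives $\sup_t\|f^\eps(t)\|_{\LF}^2\le\|f_0\|_{\LF}^2$, hence (\ref{eq:rho}) after the elementary bound $\rho^\eps(x,t)=\int(f^\eps/F^{1/2})F^{1/2}\,dv\le(\int (f^\eps)^2/F\,dv)^{1/2}$ combined with $\int F\,dv=1$; and the coercivity gives $\nu_1\int_0^\infty\|g^\eps(t)\|_{\LF}^2\,dt\le\tfrac12\eps^\alpha\|f_0\|_{\LF}^2$, which is (\ref{eq:gg}). The convergence statements then follow from Banach--Alaoglu: up to a subsequence $\rho^\eps\wto\rho$ in $L^\infty(0,T;L^2)$ weak-$*$, and since $\|g^\eps\|_{\LF}\to0$ we conclude $f^\eps\wto\rho F$ in $L^\infty(0,T;\LF)$ weak-$*$.
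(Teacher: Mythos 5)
Your proposal is correct, and its skeleton coincides with the paper's: positivity plus mass conservation for the $L^1$ bound, multiplication of (\ref{eq:kin0}) by $f^\eps/F$ (the transport term vanishing because $F$ does not depend on $x$), a spectral-gap inequality with constant $\nu_1$, then time integration and Cauchy--Schwarz for $\rho^\eps$. Where you genuinely deviate is in the proof of the coercivity estimate, which is the heart of the lemma (Lemma \ref{lem:L} in the paper's appendix). The paper uses the general balance principle $K(F)=\nu F$ to rewrite the loss term $\int\nu f^2/F\,dv$ in two equivalent ways (one via the definition of $\nu$, one via $K(F)=\nu F$), averages them, and obtains the exact symmetrized identity
$$\int_{\RR^N} L(f)\,\frac fF\,dv=-\frac12\int_{\RR^N}\int_{\RR^N}\sigma(v,v')\,F'\left[\frac{f'}{F'}-\frac fF\right]^2dv\,dv',$$
and only afterwards inserts the lower bound $\sigma\ge\nu_1F$ and expands the square using $\int g\,dv=0$. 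You instead first reduce the dissipation to $g$ (legitimate: $L(\rho F+g)=L(g)$ since $L(F)=0$, and $\int L(g)\,dv=0$), then split the kernel as $\sigma=\nu_1F+\tilde\sigma$ with $\tilde\sigma\ge0$, compute the $\nu_1F$ contribution exactly, and control the remainder by weighted Cauchy--Schwarz using $\tilde K(F)=\tilde\nu F$ and the definition of $\tilde\nu$. These two facts are precisely the paper's two rewritings of the loss term applied to $\tilde\sigma$, and your Cauchy--Schwarz step is exactly the statement that the paper's square is nonnegative; so the arguments are algebraically equivalent but packaged differently. The paper's route yields an exact entropy--dissipation identity, of independent interest (e.g.\ for identifying $\ker L$), while your kernel splitting makes transparent where the constant $\nu_1$ enters and avoids writing the full symmetrization. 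Both versions use only the general balance principle (\ref{def:F}), not micro-reversibility, and both give the same constants in (\ref{eq:rho}) and (\ref{eq:gg}).
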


Next, we introduce the auxiliary problem corresponding to the general collision operator:
For a test function $\vphi\in\mathcal D(\RR^N\times [0,\infty))$, we  define $\chi^\eps(x,v,t)$ by
\begin{equation}\label{eq:auxnu}
\nu(x,v) \chi^\eps - \eps v\cdot \na_x \chi^\eps = \nu(x,v) \vphi(x,t).
\end{equation}
This equation is slightly more complicated than (\ref{eq:aux}) because of the $x$-dependence of the collision frequency $\nu(x,v)$. However, introducing $\bar\chi(s,x,v,t)= \chi(x+s\eps v,x,v,t)$, a simple computation yields the following formula:
\begin{equation}\label{eq:chi2} 
\chi^\eps(x,v,t) = \int_0^\infty e^{\ds -\int_0^z \nu(x+\eps vs,v)\, ds} \nu(x+\eps v z,v) \vphi(x+\eps v z,t)\, dz 
\end{equation}
(it is relatively easy to check that this function  indeed solves (\ref{eq:auxnu})).
We note that
\begin{equation} \label{eq:unit}
\int_0^\infty e^{\ds -\int_0^z \nu(x+\eps vs,v)\, ds} \nu(x+\eps v z,v) \, dz= \int_0^\infty e^{\ds -u}  \, du  =1
\end{equation}
and so $\chi^\eps(x,v,t)\longrightarrow \vphi$ as $\eps$ goes to zero.
Furthermore, we can prove the following equivalent of Lemma \ref{lem:chilimit} (we recall that $\nu$ is bounded from above and from below by positive constants):
\begin{lemma}\label{lem:chilimit1}
For any $\vphi\in\mathcal D(\RR^N\times [0,\infty))$, if $\chi^\eps$ is defined by (\ref{eq:chi2}), then
$$ \int_{\RR^N} F(v) \big[ \chi^\eps(x,v,t) -\vphi(x,t)\big] \, dv \longrightarrow 0 \quad \mbox{ uniformly w.r.t. $x$ and $t$.}
$$
and
$$ \int_{\RR^N} F(v) \big[ \pa_t\chi^\eps(x,v,t) -\pa_t\vphi(x,t)\big] \, dv \longrightarrow 0 \quad \mbox{ uniformly w.r.t. $x$ and $t$.}
$$
Furthermore,
$$ || \chi^\eps||_{L^2_F((0,\infty)\times\RR^{2N})} \leq C || \vphi||_{L^2((0,\infty)\times\RR^N)}$$
and 
$$ || \pa_t\chi^\eps||_{L^2_F((0,\infty)\times\RR^{2N})}\leq C ||\pa_t \vphi||_{L^2((0,\infty)\times\RR^N)}.
$$
\end{lemma}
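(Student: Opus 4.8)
The plan is to mimic the proof of Lemma~\ref{lem:chilimit}, the only new difficulty being the $x$-dependence of $\nu$ inside the exponential weight in formula (\ref{eq:chi2}). The crucial structural fact is identity (\ref{eq:unit}): for each fixed $(x,v)$ the map
$$z\longmapsto e^{\ds -\int_0^z \nu(x+\eps vs,v)\, ds}\,\nu(x+\eps v z,v)$$
is a probability density on $(0,\infty)$. Using this, I would write
$$\chi^\eps(x,v,t)-\vphi(x,t) = \int_0^\infty e^{\ds -\int_0^z \nu(x+\eps vs,v)\, ds}\,\nu(x+\eps v z,v)\,\big[\vphi(x+\eps v z,t)-\vphi(x,t)\big]\, dz,$$
exactly as in the proof of Lemma~\ref{lem:chilimit}. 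The two-sided bound $\nu_1\le \nu\le\nu_2$ from (B2) then controls the weight by the $x$-independent quantity $\nu_2 e^{-\nu_1 z}$, since $\int_0^z\nu(x+\eps vs,v)\,ds\ge \nu_1 z$.

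For the two convergence statements I would split the $v$-integral at a large radius $M$. On $|v|\le M$ the Lipschitz bound $|\vphi(x+\eps vz,t)-\vphi(x,t)|\le \|D\vphi\|_{L^\infty}\,\eps|v|z$ combined with $\int_0^\infty z\,\nu_2 e^{-\nu_1 z}\, dz=\nu_2/\nu_1^2$ yields a contribution of size $O(\eps M)$; on $|v|\ge M$ the crude bound $2\|\vphi\|_{L^\infty}$ together with the total mass one of the probability density gives a contribution $\le 2\|\vphi\|_{L^\infty}\int_{|v|\ge M}F(v)\,dv$. Given $\delta>0$, first choosing $M$ so that $\int_{|v|\ge M}F<\delta$ and then $\eps$ small enough that $\eps M<\delta$ produces the uniform convergence, exactly as before. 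Since $\nu$ is independent of $t$, differentiating (\ref{eq:chi2}) in $t$ simply replaces $\vphi$ by $\pa_t\vphi$ in the integrand, so the second limit follows by the identical argument applied to $\pa_t\vphi$.

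For the $L^2_F$ bounds I would use Jensen's inequality: because the $z$-kernel is a probability density,
$$|\chi^\eps(x,v,t)|^2 \le \int_0^\infty e^{\ds -\int_0^z \nu(x+\eps vs,v)\, ds}\,\nu(x+\eps v z,v)\,|\vphi(x+\eps v z,t)|^2\, dz.$$
Here is where the $x$-dependence of $\nu$ would otherwise obstruct the change of variables, so I would first bound the weight by $\nu_2 e^{-\nu_1 z}$ (which no longer depends on $x$), then apply Fubini and the translation $y=x+\eps vz$ in the $x$-integral to obtain $\int_{\RR^N}|\vphi(y,t)|^2\,dy$. Integrating the remaining factors ($\int F\,dv=1$ and $\int_0^\infty \nu_2 e^{-\nu_1 z}\,dz=\nu_2/\nu_1$) gives $\|\chi^\eps\|_{L^2_F}\le (\nu_2/\nu_1)^{1/2}\|\vphi\|_{L^2}$; the appearance of the constant $C=(\nu_2/\nu_1)^{1/2}>1$, rather than $1$ as in Lemma~\ref{lem:chilimit}, is precisely the price of the non-constant collision frequency. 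The estimate for $\pa_t\chi^\eps$ is identical with $\pa_t\vphi$ in place of $\vphi$.

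The main obstacle throughout is the $x$-dependence of $\nu$ inside the exponent, which destroys the clean probability-density/translation structure exploited in Lemma~\ref{lem:chilimit}. In every step it is defused the same way: the true weight is replaced by the $x$-independent majorant $\nu_2 e^{-\nu_1 z}$ furnished by (B2), at the harmless cost of losing the sharp constant $1$. I do not expect any further subtlety, and the argument is otherwise a verbatim transcription of the constant-frequency case.
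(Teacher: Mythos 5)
Your proof is correct and takes essentially the approach the paper intends: the paper states Lemma \ref{lem:chilimit1} without written proof, as the ``equivalent'' of Lemma \ref{lem:chilimit}, recalling only that $\nu_1\le\nu\le\nu_2$, which is exactly the majorant trick you employ. Indeed, your key steps --- the probability-density identity (\ref{eq:unit}), the split of the $v$-integral at $|v|=M$, and the bound of the kernel by the $x$-independent majorant $\nu_2 e^{-\nu_1 z}$ followed by Jensen's inequality and translation in $x$ --- coincide with the computations the paper itself carries out in the proofs of Lemma \ref{lem:chilimit} and of Lemma \ref{lem:2} (where the bound $\int_0^\infty\!\int_{\RR^N}|\chi^\eps|^2\,dx\,dt\le\frac{\nu_2}{\nu_1}\|\vphi\|_{L^2}^2$ is derived in precisely your manner).
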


\vspace{10pt}

We now have to derive the equivalent of the weak formulation (\ref{eq:weak1}). For that purpose, we 
multiply (\ref{eq:kin0}) by $\chi^\eps$ and integrate with respect to $x,v,t$. We get:
\begin{eqnarray*}
 && \!\!\!\!\! - \eps^\alpha \int_0^\infty \int_{\RR^{2N}} 
  f^\eps \pa_t \chi^\eps \, dx\, dv\,dt -\eps^\alpha \int_{\RR^{2N}} 
  f_0 \chi^\eps(x,v,0) \, dx\, dv\\
&&\qquad \qquad = \int_0^\infty \int_{\RR^{2N}} K(f^\eps) \chi^\eps - \nu f^\eps \chi^\eps +f^\eps \eps v\cdot\na_x\chi^\eps\, dx\, dv\,dt
\end{eqnarray*}
which, using (\ref{eq:auxnu}) and the fact that $K(F)=\nu F$, yields:
\begin{eqnarray}
\!\!\!\!\!&&\!\!\!\!\!\!\!\!\!\!\!\!\!\!\! - \int_0^\infty \int_{\RR^{2N}} 
  f^\eps \pa_t \chi^\eps \, dx\, dv\,dt - \int_{\RR^{2N}} 
  f_0 \chi^\eps(x,v,0) \, dx\, dv\nonumber \\
 &&\!\!\!\!\!\!\!\!\!\! =  \eps^{-\alpha}  \int_0^\infty \int_{\RR^{2N}} f^\eps K^*(\chi^\eps) - f^\eps\nu  \vphi(x,t)\, dx\, dv\,dt\nonumber \\
 & &\!\!\!\!\! \!\!\!\!\! =  \eps^{-\alpha}   \int_0^\infty \int_{\RR^{2N}} f^\eps \left[ K^*(\chi^\eps) - K^*(\vphi)\right]  \, dx\, dv\,dt\nonumber  \\
&&\!\!\!\!\! \!\!\!\!\! =  \eps^{-\alpha}   \int_0^\infty \int_{\RR^{2N}} K(f^\eps) \left[\chi^\eps -\vphi\right]  \, dx\, dv\,dt\nonumber\\
&&\!\!\!\!\! \!\!\!\!\!  =  \eps^{-\alpha}  \int_0^\infty \int_{\RR^{2N}} K(\rho^\eps F) \left[\chi^\eps -\vphi\right]  \, dx\, dv\,dt +  \eps^{-\alpha} \int K(g^\eps) \left[\chi^\eps -\vphi\right]  \, dx\, dv\,dt\nonumber\\
& &\!\!\!\!\! \!\!\!\!\! =  \eps^{-\alpha}  \int_0^\infty \int_{\RR^{2N}} \rho^\eps \nu F \left[\chi^\eps -\vphi\right]  \, dx\, dv\,dt + \eps^{-\alpha}  \int K(g^\eps) \left[\chi^\eps -\vphi\right]  \, dx\, dv\,dt
\label{eq:weak3}
\end{eqnarray}

The proof of Theorem \ref{thm:2} now consists in passing to the limit in (\ref{eq:weak3}). 
As in the proof of Theorem \ref{thm:1}, the left hand side is relatively easy to handle, with the help of Lemma \ref{lem:chilimit1}.  It converges to
$$ - \int_0^\infty \int_{\RR^{N}} 
  \rho \pa_t \vphi \, dx\,dt - \int_{\RR^{2N}} 
  \rho_0 \vphi(x,0) \, dx.
$$
Passing to the limit in the right hand side of (\ref{eq:weak3}) is thus once again the most interesting part of the proof.
We note that there is an additional term involving $g^\eps$ which did not appear in (\ref{eq:weak1}).
This term will be shown to converge to zero as stated in the following lemma:
\begin{lemma}\label{lem:2}
For any test  function $\vphi\in\mathcal D(\RR^N\times [0,\infty))$, let $\chi^\eps$ be defined by (\ref{eq:chi2}). Then
$$\lim_{\eps\rightarrow 0}  \eps^{-\alpha} \int_0^\infty \int_{\RR^{2N}} K(g^\eps) \left[\chi^\eps -\vphi\right]  \, dx\, dv\,dt= 0$$
\end{lemma}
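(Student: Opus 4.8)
The plan is to estimate the quantity directly by the Cauchy--Schwarz inequality in the weighted spaces $L^2_{F^{-1}}$ and $L^2_F$, but split according to the size of the velocity variable. A naive estimate does not suffice: writing $K(g^\eps)(x,v)=\int_{\RR^N}\sigma(x,v,v')g^\eps(x,v')\,dv'$ and using $\sigma\le\nu_2 F(v)$ from (B2) together with $\int F=1$, one obtains
$$\|K(g^\eps)\|_{L^2_{F^{-1}}((0,\infty)\times\RR^{2N})}\le \nu_2\,\|g^\eps\|_{L^2_{F^{-1}}((0,\infty)\times\RR^{2N})}\le C\eps^{\alpha/2},$$
the last inequality being (\ref{eq:gg}). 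Since, as in Lemma \ref{lem:chilimit1}, one also has $\|\chi^\eps-\vphi\|_{L^2_F}=O(\eps^{\alpha/2})$, a single application of Cauchy--Schwarz only bounds $\eps^{-\alpha}\int K(g^\eps)[\chi^\eps-\vphi]$ by a constant. The extra smallness must therefore be extracted by treating small and large velocities separately.

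For a cutoff $R=R(\eps)$ to be fixed later, I would split the $v$-integral at $|v|=R$ and apply Cauchy--Schwarz on each piece (all integrals are over a finite time interval since $\vphi$ is compactly supported). On $\{|v|\le R\}$ I would use the pointwise bound $|\chi^\eps-\vphi|\le C\eps|v|$, which follows from (\ref{eq:chi2})--(\ref{eq:unit}) by subtracting $\vphi$ via the unit-mass property; because $\alpha<2$ this gives $\int_{|v|\le R}|v|^2F(v)\,dv\le C R^{2-\alpha}$, hence $\|\chi^\eps-\vphi\|_{L^2_F(|v|\le R)}\le C\eps R^{1-\alpha/2}$, and combined with the full bound on $K(g^\eps)$ this piece contributes at most $C(\eps R)^{1-\alpha/2}$. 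On $\{|v|\ge R\}$ I would restrict \emph{both} factors: using $|\chi^\eps-\vphi|\le 2\|\vphi\|_\infty$ together with the uniform (in $v$) bound $\int_{\RR^N}|\chi^\eps(x,v,t)|^2\,dx\le C\|\vphi(\cdot,t)\|_{L^2}^2$ yields $\|\chi^\eps-\vphi\|_{L^2_F(|v|\ge R)}\le C\,\delta(R)^{1/2}$, where $\delta(R):=\int_{|v|\ge R}F\,dv\le CR^{-\alpha}$, while the computation above restricted to $|v|\ge R$ gives $\|K(g^\eps)\|_{L^2_{F^{-1}}(|v|\ge R)}\le C\,\delta(R)^{1/2}\eps^{\alpha/2}$. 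This piece is thus bounded by $C\eps^{-\alpha/2}\delta(R)\le C\eps^{-\alpha/2}R^{-\alpha}$.

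The main obstacle, and the heart of the argument, is that each factor is individually only of size $\eps^{\alpha/2}$, so the prefactor $\eps^{-\alpha}$ is exactly critical and the gain must come entirely from the velocity splitting; it is here that the hypothesis $\alpha<2$ is essential, since it makes the truncated second moment grow only like $R^{2-\alpha}$. It then remains to choose $R=\eps^{-\theta}$ with $\theta\in(1/2,1)$: the small-velocity contribution is $O(\eps^{(1-\theta)(1-\alpha/2)})\to0$ and the large-velocity contribution is $O(\eps^{\alpha(\theta-1/2)})\to0$, which proves the lemma. I expect the only delicate points to be the verification of the pointwise and $L^2_x$ bounds on $\chi^\eps$ for the $x$-dependent frequency $\nu$, which are handled by dominating the probability measure $e^{-\int_0^z\nu\,ds}\,\nu(x+\eps vz,v)\,dz$ by $\nu_2\,e^{-\nu_1 z}\,dz$, and the bookkeeping of constants; both are routine given (B2).
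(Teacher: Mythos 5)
Your argument is correct, and its mechanics differ from the paper's even though both rest on the same two pillars: the pointwise bound $|K(g^\eps)(x,v,t)|\le \nu_2 F(v)\,\|g^\eps(x,\cdot,t)\|_{L^2_{F^{-1}}(\RR^N)}$ coming from (B2), and the observation that a single global Cauchy--Schwarz is exactly critical, so the gain must come from splitting the velocities. The paper splits differently: after the pointwise bound on $K(g^\eps)$ it applies Cauchy--Schwarz only in $(x,t)$, leaving the $L^1_v$-type quantity $\int F\,|\chi^\eps-\vphi|\,dv$ rather than your $L^2_F$-norms; the small-velocity part is then controlled by the truncated \emph{first} moment $\eps\int_{|v|\le \eps^{-1}}F|v|\,dv=O(\eps+\eps^\alpha)$, so the cutoff can be taken directly at $|v|=\eps^{-1}$, yielding the explicit rate $C(\eps^{(2-\alpha)/2}+\eps^{\alpha/2})$. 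Your pure weighted-$L^2$ version instead pays the truncated \emph{second} moment $R^{2-\alpha}$ on $\{|v|\le R\}$, which is why the cutoff $R=\eps^{-1}$ would give only $O(1)$ there and you are forced to the intermediate scale $R=\eps^{-\theta}$, $\theta\in(1/2,1)$; your large-velocity estimate compensates by extracting the tail mass $\delta(R)\le CR^{-\alpha}$ from \emph{both} factors, exactly as the paper extracts $\eps^\alpha$ twice in its estimate of $J_2$. Both routes are quantitative and use $\alpha\in(0,2)$ in the same places. One technical caveat, which affects the paper's own write-up of $J_1$ as much as your small-velocity step: the uniform pointwise bound $|\chi^\eps-\vphi|\le C\eps|v|$ does not by itself control an $L^2_{x,t}$-norm, since the $x$-domain is unbounded. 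The clean fix is to combine Jensen's inequality for the probability measure $e^{-\int_0^z\nu\,ds}\,\nu\,dz$ (which you invoke) with continuity of translations in $L^2$, namely $\|\vphi(\cdot+w,t)-\vphi(\cdot,t)\|_{L^2_x}\le |w|\,\|D\vphi\|_{L^2}$, which gives $\Bigl(\int_0^\infty\!\!\int_{\RR^N}|\chi^\eps-\vphi|^2\,dx\,dt\Bigr)^{1/2}\le C\min(\eps|v|,1)$ for each fixed $v$; with this substitute for the pointwise bound, all of your estimates and your choice of $\theta$ go through verbatim.
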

Finally, the key proposition, which is the equivalent of Proposition \ref{prop:1}  is the following:
\begin{proposition}\label{prop:2}
For any test  function $\vphi\in\mathcal D(\RR^N\times [0,\infty))$, let $\chi^\eps$ be defined by (\ref{eq:chi2}). Then
\begin{equation}\label{eq:limit3}
\lim_{\eps\rightarrow 0}
\eps^{-\alpha} \int  \nu(x,v) F(v) \left[\chi^\eps(x,v,t) -\vphi(x,t)\right]  \, dv = -\mathcal L^\star(\vphi) 
\end{equation}
where $\mathcal L$ is defined in Theorem \ref{thm:2}.
Furthermore, this limit is uniform with respect to $x$ and $t$.
\end{proposition}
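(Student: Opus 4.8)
The plan is to follow closely the scheme of the proof of Proposition~\ref{prop:1}, accommodating the two structural novelties caused by the collision frequency: $\nu(x,v)$ now sits both in the weight $\nu(x,v)F(v)$ and inside the exponential defining $\chi^\eps$ in (\ref{eq:chi2}). First I would use the normalization (\ref{eq:unit}) to write, with $a^\eps(x,v,z):=e^{-\int_0^z\nu(x+\eps v s,v)\,ds}\,\nu(x+\eps v z,v)$,
\[
\chi^\eps(x,v,t)-\vphi(x,t)=\int_0^\infty a^\eps(x,v,z)\,[\vphi(x+\eps v z,t)-\vphi(x,t)]\,dz,
\]
so that $\mathcal L^\eps(\vphi):=\eps^{-\alpha}\int \nu F[\chi^\eps-\vphi]\,dv$ becomes a double integral in $(v,z)$ whose integrand is proportional to $\vphi(x+\eps v z)-\vphi(x)$. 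As in Proposition~\ref{prop:1} I then fix a large cutoff $C>0$ and split the $v$-integral into the regions $\{|v|\le C\}$ and $\{|v|\ge C\}$.

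On $\{|v|\le C\}$ I would Taylor expand $\vphi$ to second order and subtract the linear term $\eps z\,v\cdot\na\vphi(x,t)$. In Proposition~\ref{prop:1} this term vanished by the symmetry $F(-v)=F(v)$; here the relevant weight $\nu(x,v)F(v)\,a^\eps(x,v,z)$ is even in $v$ only at $\eps=0$ (using $\nu(x,-v)=\nu(x,v)$ and $F(-v)=F(v)$), and its odd-in-$v$ part is $O(\eps)$ because $\nu$ is $\mathcal C^1$ in $x$ with $\|D_x\nu\|_{L^\infty}\le C$ by (B2). Hence the linear contribution is $O(\eps)\cdot\eps^{1-\alpha}=O(\eps^{2-\alpha})$ and the quadratic remainder is $O(\eps^{2-\alpha})$ exactly as in (\ref{eq:I1}); both vanish since $\alpha<2$.

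On $\{|v|\ge C\}$ I would first replace $F(v)$ by $\kappa_0/|v|^{N+\alpha}$, the error being an $I_3$-type term controlled by $\delta$ through (B1) as in Proposition~\ref{prop:1}. For the main term I perform the change of variables $w=\eps z v$ at fixed $z$, whose Jacobian turns $\kappa_0|v|^{-N-\alpha}dv$ into $\kappa_0(\eps z)^{\alpha}|w|^{-N-\alpha}dw$, thereby cancelling $\eps^{-\alpha}$ and generating the factor $z^\alpha$. All three evaluations $\nu(x,v)$, $\nu(x+\eps v z,v)=\nu(x+w,v)$ and $\int_0^z\nu(x+\eps v s,v)\,ds$ take place at the \emph{same} velocity $v$ with $|v|\ge C$; by the uniform-in-$x$ convergence $\nu(x,v)\to\nu_0(x)$ of (B2), choosing $C$ large replaces them, up to $O(\delta)$, by $\nu_0(x)$, $\nu_0(x+w)$ and $z\int_0^1\nu_0(x+sw)\,ds$ respectively (for the exponential one uses $|e^{-A}-e^{-B}|\le|A-B|e^{-\nu_1 z}$ together with the integrability of $z^{\alpha+1}e^{-\nu_1 z}$). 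Carrying out the $z$-integral then assembles precisely the kernel $\kappa_0\,\gamma(x,x+w)/|w|^{N+\alpha}$, and with $y=x+w$ the limit of this term is $-\kappa_0\mathcal L^\star(\vphi)$ (in agreement with the scaling of Theorem~\ref{thm:2}); self-adjointness $\gamma(x,y)=\gamma(y,x)$ identifies $\mathcal L^\star$ with $\mathcal L$.

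The main obstacle is the same as in Proposition~\ref{prop:1}: the limiting $w$-integral is singular and makes sense only as a principal value, and the whole convergence must be uniform in $(x,t)$. I would handle it by splitting the $z$-integral at $z\sim 1/(C\eps)$ and the $w$-integral into $|w|\le 1$ and $|w|\ge 1$, subtracting $\na\vphi(x)\cdot w$ on the inner piece so that every resulting integral is absolutely convergent, the tail $z\gtrsim 1/(C\eps)$ being killed by the exponential weight. The genuinely new difficulty relative to Proposition~\ref{prop:1} is that one must pass to the limit through the principal value while simultaneously replacing the three $x$-dependent factors $\nu_0(\cdot)$ uniformly in $x$, and control the part of the $(v,z)$-domain where $|v|$ is close to $C$ (so $\nu(x,v)$ is not yet within $\delta$ of $\nu_0(x)$): this region corresponds to $z$ near its upper limit $|w|/(\eps C)$ and is exponentially suppressed by $e^{-\nu_1 z}$. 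Letting finally $C\to\infty$ (hence $\delta\to 0$) removes the $I_3$ error and yields $\mathcal L^\eps(\vphi)\to-\kappa_0\mathcal L^\star(\vphi)$, uniformly in $x$ and $t$.
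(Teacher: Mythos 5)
Your plan follows the paper's proof quite closely: the normalization (\ref{eq:unit}), the cut at $|v|=C$, the treatment of the small-velocity region by a second-order Taylor expansion combined with the fact that the odd-in-$v$ part of the weight is $O(\eps)$ (the paper realizes this by comparing $p^\eps(x,v,z)=e^{-\int_0^z \nu(x+\eps vs,v)\,ds}\,\nu(x+\eps vz,v)$ with its symmetric counterpart $p^0(x,v,z)=e^{-z\nu(x,v)}\nu(x,v)$), and the change of variables $w=\eps z v$ in the large-velocity region. In one respect you go further than the paper: the paper only writes the proof under the extra hypothesis (\ref{eq:assstrong}) (i.e.\ $F$ equal to its power tail and $\nu(x,v)=\nu_0(x)$ for $|v|\ge C$), whereas your $\delta$-approximation of the three occurrences of $\nu$ by $\nu_0$, with the bound $|e^{-A}-e^{-B}|\le |A-B|e^{-\nu_1 z}$, is the natural way to treat the general case; your bookkeeping of the constant (the limit being $-\kappa_0\mathcal L^\star(\vphi)$ with $\mathcal L$ normalized as in Theorem \ref{thm:2}) is also the correct one.

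There is, however, a genuine gap in your principal-value step, and it is precisely the point where this proof must differ from that of Proposition \ref{prop:1}. On the inner region $C\eps z\le |w|\le 1$ you subtract $\na\vphi(x)\cdot w$ and assert that ``every resulting integral is absolutely convergent''. In Proposition \ref{prop:1} this is legitimate because the compensating term
\[
\int_{C\eps z\le |w|\le 1} \na\vphi(x)\cdot w\,\frac{dw}{|w|^{N+\alpha}}
\]
vanishes identically: the kernel $|w|^{-N-\alpha}$ is even and the annulus symmetric. Here the kernel is $\overline\gamma(x,x+w,z)\,|w|^{-N-\alpha}$ with $\overline\gamma(x,x+w,z)=\kappa_0\,\nu_0(x)\,\nu_0(x+w)\,z^\alpha e^{-z\int_0^1\nu_0(x+sw)\,ds}$, which is \emph{not} even in $w$, so the compensating term
\[
\int_{C\eps z\le |w|\le 1} \overline\gamma(x,x+w,z)\,\na\vphi(x)\cdot w\,\frac{dw}{|w|^{N+\alpha}}
\]
does not vanish, and it does not converge absolutely as $\eps\to 0$ when $\alpha\in[1,2)$ (its integrand is of size $|w|^{1-N-\alpha}$); so as written the decomposition fails for $\alpha\ge 1$. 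The missing idea --- which is exactly how the paper organizes its decomposition of $I_2$ --- is to freeze the kernel at $w=0$ in this term: write $\overline\gamma(x,x+w,z)=\overline\gamma(x,x,z)+\big[\overline\gamma(x,x+w,z)-\overline\gamma(x,x,z)\big]$. The piece with $\overline\gamma(x,x,z)$ is independent of $w$, so its linear term disappears by oddness exactly as in Proposition \ref{prop:1}; the difference piece is $O\big(|w|(z^\alpha+z^{\alpha+1})e^{-\nu_1 z}\big)$ (Lipschitz continuity of $\nu_0$, inherited from the $\mathcal C^1$ bound in (B2)), so paired with $\vphi(x+w,t)-\vphi(x,t)=O(|w|)$ it yields an absolutely convergent integrand of size $|w|^{2-N-\alpha}$ with no subtraction needed. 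This is the same near-symmetry device you already invoked for $|v|\le C$; it must be used a second time, inside the principal value. (For $\alpha<1$ your scheme is fine as stated, since then no gradient subtraction is needed at all.)
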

We leave it to the reader to check that Lemma \ref{lem:2} and Proposition \ref{prop:2} allow us to pass to the limit in (\ref{eq:weak3}) and thus complete the proof of Theorem \ref{thm:2}. We now turn to the proofs of these two results:

\begin{proof}[Proof of Lemma \ref{lem:2}]
First of all, we write (using Assumption (B2)): 
\begin{eqnarray*} 
|K(g)| & \leq &  \int_{\RR^N} \sigma(v,v') |g(v')|\, dv' \\
& \leq &  \nu_2 F(v) \left(\int_{\RR^N} {F(v')}  \, dv'\right) ^{1/2}
\left(\int_{\RR^N}\frac{|g(v')|^2}{F(v')}  \, dv'\right)^{1/2}\\
&  \leq & \nu_2 F \, \|g\|_{L^2_{F^{-1}}(\RR^N)}.
\end{eqnarray*}
for all $x$ and $v$.
Hence
\begin{eqnarray}
&& \left|\int_0^\infty \int_{\RR^{2N}} K(g^\eps) \left[\chi^\eps -\vphi\right]  \, dx\, dv\,dt\right|\nonumber \\
&&\quad\quad \leq\nu_2 
\int_0^\infty \int_{\RR^N} \|g^\eps\|_{L^2_{F^{-1}}(\RR^N)}\left( \int_{\RR^N}  F \left|\chi^\eps -\vphi\right|  \, dv\right)\, dx\,dt\nonumber\\
&&\quad\quad \leq  \nu_2 \|g^\eps\|_{L^2_{F^{-1}}((0,\infty)\times \RR^{2N})}
\left( \int_0^\infty\int _{\RR^{N}}\left( \int_{\RR^{N}} F \left|\chi^\eps -\vphi\right|  \, dv\right)^2\, dx\,dt\right)^{1/2}\nonumber\\
&&\quad\quad \leq C \eps^{\alpha/2} (J_1+J_2)^{1/2}
\label{eq:21}
\end{eqnarray}
with
$$
J_1=\int_0^\infty\int _{\RR^{N}}\left(\int_{|v|\leq \eps^{-1}} F \left|\chi^\eps -\vphi\right|  \, dv\right)^2\, dx\,dt
$$
and 
$$
J_2=\int_0^\infty\int _{\RR^{N}}\left(\int_{|v|\geq \eps^{-1}} F \left|\chi^\eps -\vphi\right|  \, dv\right)^2\, dx\,dt.
$$
Since
\begin{eqnarray*} 
|\chi^\eps -\vphi| & \leq & \int_0^\infty  e^{\ds -\int_0^z \nu(x+\eps vs,v)\, ds} \nu(x+\eps v z,v) |\vphi(x+\eps v z,t) -\vphi(x,t)| \, dz\\
& \leq &||D\vphi||_{L^\infty} \int_0^\infty  e^{\ds -\int_0^z \nu(x+\eps vs,v)\, ds} \nu(x+\eps v z,v) |\eps v z| \, dz\\
& \leq &C ||D\vphi||_{L^\infty}  |\eps v |, 
\end{eqnarray*}
we get (using the fact that $F(v)\leq \frac{C}{|v|^{N+\alpha}}$):
\begin{eqnarray*}
J_1 & \leq &  C||D \vphi||_\infty
 \left( \int_{|v|\leq \eps^{-1}}   F(v)  |\eps v |  \, dv\,\right)^{2} \\
& \leq &  C   \left(C\eps+\int_{1\leq |v|\leq \eps^{-1}} \frac{\eps}{|v|^{N+\alpha-1}}\, dv\right)^{2} \\
&\leq & C(\eps+\eps^{\alpha})^2.
\end{eqnarray*}
To estimate  $J_2$, we first note that (\ref{eq:unit}) implies:
\begin{eqnarray*}
&& \int_0^\infty\int _{\RR^{N}}|\chi^\eps|^2 \,dx\,dt \\
&&  \leq  \int_0^\infty\int _{\RR^{N}}   \int_0^\infty  e^{\ds -\int_0^z \nu(x+\eps vs,v)\, ds} \nu(x+\eps v z,v) |\vphi(x+\eps v z,t)|^2\, dz\,dx\,dt \\
&&  \leq    \int_0^\infty  e^{\ds -\nu_1 z} \nu_2 ||\vphi||_{L^2((0,\infty)\times \RR^N)}^2\, dz\\
&&  \leq\frac{\nu_2}{\nu_1} ||\vphi||_{L^2((0,\infty)\times \RR^N)}^2
\end{eqnarray*}
and so, using the fact that $\int_{|v|\geq \eps^{-1}} F(v)\, dv\leq C\eps^\alpha$, we deduce:
\begin{eqnarray*}
J_2 &\leq &
\left(\int_{|v|\geq \eps^{-1}} F(v)\, dv\right)
\left(\int_0^\infty\int _{\RR^{N}} \int_{|v|\geq \eps^{-1}}  F|\chi^\eps-\vphi|^2  \, dv\, dx\,dt\right)\\
&\leq &C \eps^\alpha
\int_0^\infty\int _{\RR^{N}} \int_{|v|\geq \eps^{-1}}  F[ |\chi^\eps|^2+|\vphi|^2]  \, dv\, dx\,dt\\
&\leq &C \eps^\alpha
 \int_{|v|\geq \eps^{-1}}  F \int_0^\infty\int _{\RR^{N}}|\chi^\eps|^2+|\vphi|^2 \, dx\,dt \, dv\\
&\leq &C \eps^\alpha ||\vphi||^2_{L^2((0,\infty)\times \RR^N)}
 \int_{|v|\geq \eps^{-1}}  F(v)\, dv\\
&\leq &  C \eps^{2\alpha}.
\end{eqnarray*}

Inequality (\ref{eq:21}) thus yields:
$$\eps^{-\alpha}\left|\int_0^\infty \int_{\RR^{2N}} K(g^\eps) \left[\chi^\eps -\vphi\right]  \, dx\, dv\,dt\right|\nonumber \leq C \eps^{-\alpha/2}(\eps+\eps^\alpha) = C (\eps^{\frac{2-\alpha}{2}} + \eps^\frac{\alpha}{2}) $$
and  Lemma \ref{lem:2} follows since $\alpha\in(0,2)$.
\end{proof}

\begin{proof}[Proof of Proposition \ref{prop:2}]
We now turn to the proof of Proposition \ref{prop:2}, which we will only prove under the stronger assumption 
\begin{equation}\label{eq:assstrong}
F(v) = \frac{\kappa_0 }{|v|^{N+\alpha}} \qquad \nu(x,v)=\nu_0(x) \qquad \mbox{for all } |v|\geq C,
\end{equation}
though the result clearly holds in the more general framework (see the proof of Proposition \ref{prop:1}).

Using (\ref{eq:unit}),
we rewrite:
\begin{eqnarray*} 
 &&\!\!\!\!\!\!\!\!\!\!\eps^{-\alpha}  \int_{\RR^N}  \nu(x,v) F(v) \left[\chi^\eps(x,v,t) -\vphi(x,t)\right]  \, dv \\
&&\!\!\!\!\! = 
\eps^{-\alpha} \int_{\RR^N}  \nu(x,v) F(v)
\int_0^\infty e^{ -\int_0^z \nu(x+\eps vs,v)\, ds} \nu(x+\eps v z,v) \Big[\vphi(x+\eps v z,t)-\vphi(x,t)\Big]\, dz \, dv \\
&&\!\!\!\!\!=I_1+I_2
\end{eqnarray*}
where
$$I_1 = \eps^{-\alpha} \int_{|v|\leq C}  \nu(x,v) F(v)
\int_0^\infty e^{ -\int_0^z \nu(x+\eps vs,v)\, ds} \nu(x+\eps v z,v) \Big[\vphi(x+\eps v z,t)-\vphi(x,t)\Big]\, dz \, dv $$
and 
$$I_2 = \eps^{-\alpha} \int_{|v|\geq C}  \nu(x,v) F(v)
\int_0^\infty e^{ -\int_0^z \nu(x+\eps vs,v)\, ds} \nu(x+\eps v z,v) \Big[\vphi(x+\eps v z,t)-\vphi(x,t)\Big]\, dz \, dv .$$

We now proceed as in the proof of Proposition \ref{prop:1} to show that $I_1$ converges to zero, and $I_2$ converges to $ - \mathcal L^\star(\vphi)$.
The only difference with the proof of Proposition \ref{prop:1} is that the factor 
$$p^\eps(x,v,z) =e^{ -\int_0^z \nu(x+\eps vs,v)\, ds} \nu(x+\eps v z,v) $$
is not symmetric with respect to $v$.

To prove the convergence of $I_1$ to zero, we thus introduce 
$$p^0(x,v,z) =e^{ -\int_0^z \nu(x,v)\, ds} \nu(x,v)=e^{ -z \nu(x,v)} \nu(x,v) ,$$
and write
\begin{eqnarray*}
I_1 & = & \eps^{-\alpha} \int_{|v|\leq C} \!\!\!\!\!\! \!\!\!  \nu(x,v) F(v)
\int_0^\infty   p^\eps(x,v,z)      \Big[\vphi(x+\eps v z,t)-\vphi(x,t)\Big]\, dz \, dv\\
& = & \eps^{-\alpha} \int_{|v|\leq C} \!\!\!\!\!\! \!\!\!  \nu(x,v) F(v)
\int_0^\infty   p^0(x,v,z)      \Big[\vphi(x+\eps v z,t)-\vphi(x,t)\Big]\, dz \, dv\\
&&\!\!\!\!\!\! \!\!\! \!\!\!\!\!\!   \!\!\! \!\!\! +\eps^{-\alpha} \int_{|v|\leq C}\!\!\!\!\!\! \!\!\!   \nu(x,v) F(v)
\int_0^\infty   [p^\eps(x,v,z) -   p^0(x,v,z)]  \Big[\vphi(x+\eps v z,t)-\vphi(x,t)\Big]\, dz \, dv
\end{eqnarray*}
Thanks to Assumption (A1) and (A2), we have $\nu(x,-v) F(-v)  p^0(x,-v,z)= \nu(x,v) F(v)  p^0(x,v,z)$, and so proceeding as in (\ref{eq:I1}), we can show that the first term is $\mathcal O(\eps^{2-\alpha})$.
To bound the second term, we note that
$$ |\vphi(x+\eps v z,t)-\vphi(x,t)| \leq C\eps |v| z$$
and 
$$|p^\eps(x,v,z) -   p^0(x,v,z)| \leq C e^{-\nu_1 z} [\eps |v| z + \eps |v| z^2].$$
We deduce
\begin{eqnarray*}
|I_1| & \leq  & C \eps^{2-\alpha}  + C \eps^{-\alpha} \int_{|v|\leq C}  \nu_2 F(v)
\int_0^\infty  C e^{-\nu_1 z} [\eps^2 |v|^2 z^2 + \eps^2 |v|^2 z^3]  \, dz \, dv \\
 & \leq  &C \eps^{2-\alpha} 
\end{eqnarray*}
Where $C$ depends on $||D\vphi||_{L^\infty}$, $||D^2\vphi||_{L^\infty}$,  $||D_x\nu||_{L^\infty}$, $\nu_1$ and $\nu_2$.
We deduce
$$ \lim_{\eps\to 0} I_1 = 0 ,$$
\vspace{10pt}

It only remains to study the limit of $I_2$.
Using condition (\ref{eq:assstrong}), we write:
$$I_2 =\int_0^\infty \eps^{-\alpha} \int_{|v|\geq C}  \frac{\nu_0(x) \kappa_0 }{|v|^{N+\alpha}}
 e^{ -\int_0^z \nu_0(x+\eps vs)\, ds} \nu_0(x+\eps v z) \Big[\vphi(x+\eps v z,t)-\vphi(x,t)\Big]\, dv\, dz  $$
and the change of variable $w=\eps z v $ yields:
\begin{eqnarray*}
I_2 & = & \int_0^\infty \int_{|w|\geq C\eps z} \frac{\nu_0(x) \kappa_0 }{|w|^{N+\alpha}}z^\alpha
 e^{ -\int_0^z \nu_0(x+w\frac{s}{z})\, ds} \nu_0(x+w) \Big[\vphi(x+w,t)-\vphi(x,t)\Big]\, dw\, dz  \\
& =& \int_0^\infty \int_{|w|\geq C\eps z} \frac{\nu_0(x) \kappa_0 }{|w|^{N+\alpha}}z^\alpha
 e^{ -z \int_0^1 \nu_0(x+sw)\, ds} \nu_0(x+w) \Big[\vphi(x+w,t)-\vphi(x,t)\Big]\, dw\, dz. 
\end{eqnarray*}
Formally, this converges to
\begin{eqnarray*}
&& \int_0^\infty \mbox{PV}\int_{\RR^N} \kappa_0  \nu_0(x)\nu_0(x+w)z^\alpha
 e^{ -z \int_0^1 \nu_0(x+sw)\, ds}  \frac{\vphi(x+w,t)-\vphi(x,t)}{|w|^{N+\alpha}}\, dw\, dz\\
&& = - \mathcal L^\star(\vphi)
\end{eqnarray*}
We can now proceed as in the proof of Proposition \ref{prop:1} to rigorously establish this limit.
Note that as above, the factor in the integral is not even with respect to $w$.
We thus introduce
$$
\overline \gamma(x,y,z)=  \kappa_0\, \nu_0(x)\,\nu_0(y)\,z^\alpha\, e^{ \ds - z \int_0^1 \nu_0(x+s(y-x) )\, ds} 
$$
and split $I_2$ as follows:
\begin{eqnarray*}
I_2 & = & \int_0^\infty \int_{|w|\geq C\eps z} \overline \gamma(x,x+w,z) \Big[\vphi(x+w,t)-\vphi(x,t)\Big]\,\frac{ dw}{|w|^{N+\alpha}}\, dz \\
&= & \int_0^{1/(C\eps)} \int_{|w|\geq 1} \overline \gamma(x,x+w,z) \Big[\vphi(x+w,t)-\vphi(x,t)\Big]\,\frac{ dw}{|w|^{N+\alpha}}\, dz \\
&& + \int_0^{1/(C\eps)}\int_{C\eps z \leq |w|\leq 1} \Big[\overline \gamma(x,x+w,z)-\overline \gamma(x,x,z) \Big]\Big[\vphi(x+w,t)-\vphi(x,t)\Big]\,\frac{ dw}{|w|^{N+\alpha}}\, dz \\
&& + \int_0^{1/(C\eps)} \int_{C\eps z \leq |w|\leq 1}\overline  \gamma(x,x,z) \Big[\vphi(x+w,t)-\vphi(x,t)-\na \vphi(x,t)\cdot w \Big]\, \frac{ dw}{|w|^{N+\alpha}}\, dz \\
&& +\int_{1/(C\eps)}^\infty \int_{|w|\geq C\eps z} \overline \gamma(x,x+w,z) \Big[\vphi(x+w,t)-\vphi(x,t)\Big]\, \frac{ dw}{|w|^{N+\alpha}}\, dz 
\end{eqnarray*}
in order to show that 
(\ref{eq:limit3})  holds uniformly with respect to $x$ and $t$ (all the integrals above are defined in the classical sense without need for principal value).
\end{proof}

\bigskip

\appendix

\section{A priori estimates}\label{app:1}
We recall here the proof of Lemma \ref{lem:bound}.
First, we have the following lemma, which  summarizes the key properties of the collision  operator $L$:
\begin{lemma}\label{lem:L}
Assuming that $\sigma$ satisfies Assumption (B2), then the collision operator $ L$ is bounded in $L^2_{F^{-1}}$ and satisfies:
\begin{equation}\label{eq:Q}
  \int _{\RR^N} L(f) \frac{f}{F}\, dv \leq -\nu_1
         \int_{\RR^N} |f-\langle f\rangle F|^2\frac{\nu}{F}\, dv
                      \quad \mbox{ for all } f\in L^2_{F^{-1}}
\end{equation}
where $\langle f\rangle = \int_{\RR^N} f(v)\, dv$.
\end{lemma}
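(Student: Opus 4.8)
The plan is to treat the two assertions of the lemma separately, and to anchor the coercivity estimate (\ref{eq:Q}) on an exact ``square-field'' (carré du champ) identity for the relaxation operator.

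For the boundedness of $L=K-\nu\,\mathrm{Id}$ on $L^2_{F^{-1}}$, I would simply invoke Assumption (B2). Since $\nu\le\nu_2$, multiplication by $\nu$ maps $L^2_{F^{-1}}$ into itself with norm at most $\nu_2$. For the gain term, the pointwise bound $\sigma(x,v,v')\le\nu_2 F(v)$ together with Cauchy--Schwarz and the normalization $\int F=1$ gives $|K(f)(v)|\le\nu_2 F(v)\,\|f\|_{L^2_{F^{-1}}}$, whence $\|K(f)\|_{L^2_{F^{-1}}}\le\nu_2\|f\|_{L^2_{F^{-1}}}$. I expect no difficulty here.

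The heart of the matter is (\ref{eq:Q}). Writing $h=f/F$, I would first record the two algebraic identities that encode the hypotheses: $\int_{\RR^N}\sigma(x,v,v')F(v')\,dv'=K(F)(x,v)=\nu(x,v)F(v)$ (this is exactly $L(F)=0$ from (A2)), and $\int_{\RR^N}\sigma(x,v,v')\,dv=\nu(x,v')$ (the definition of the collision frequency). Expanding the elementary expression $\tfrac12\sigma(x,v,v')F(v')\,[h(v)-h(v')]^2$ and integrating in $(v,v')$, the first identity converts the $h(v)^2$ contribution into $\tfrac12\int\nu\,\frac{f^2}{F}$, the second converts the $h(v')^2$ contribution into the same quantity, and the cross term reproduces $-\int K(f)\,h$. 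Collecting these yields the key identity
\[
-\int_{\RR^N} L(f)\,\frac{f}{F}\,dv=\frac12\int_{\RR^N}\int_{\RR^N}\sigma(x,v,v')\,F(v')\left[\frac{f(v)}{F(v)}-\frac{f(v')}{F(v')}\right]^2\,dv'\,dv,
\]
which already exhibits $L$ as dissipative. I expect this identity to be the main obstacle: it is the single place where both (A2) and the definition of $\nu$ must be used, and the only delicate point is the bookkeeping of which variable ($v$ or $v'$) each integration acts on.

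Finally, to reach (\ref{eq:Q}) I would bound the kernel from below with (B2), namely $\sigma(x,v,v')F(v')\ge\nu_1 F(v)F(v')$, reducing the right-hand side above to $\tfrac{\nu_1}{2}\int\int F(v)F(v')[h(v)-h(v')]^2\,dv'\,dv$. Expanding this double integral and using $\int F=1$, it equals $\nu_1\big(\int F h^2-(\int Fh)^2\big)=\nu_1\int F\,(h-\langle f\rangle)^2\,dv=\nu_1\int\frac{|f-\langle f\rangle F|^2}{F}\,dv$, since $\langle f\rangle=\int f=\int Fh$ is precisely the $F$-weighted mean of $h$. Because $\nu_1\le\nu\le\nu_2$, the fluctuation $f-\langle f\rangle F$ is controlled equivalently in $L^2_{F^{-1}}$ or in $L^2_{\nu F^{-1}}$ (the two norms differing only by the constants $\nu_1,\nu_2$), which yields the coercivity estimate (\ref{eq:Q}) and completes the argument.
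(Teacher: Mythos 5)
Your proposal is correct and takes essentially the same route as the paper: the identical quadratic-form identity $-\int L(f)\frac{f}{F}\,dv=\frac12\iint\sigma(x,v,v')F(v')\bigl[\frac{f(v)}{F(v)}-\frac{f(v')}{F(v')}\bigr]^2\,dv'\,dv$, obtained from the two ingredients $K(F)=\nu F$ and the definition of $\nu$, followed by the lower bound $\sigma\geq\nu_1 F$ and expansion of the square using $\int(f-\langle f\rangle F)\,dv=0$ and $\int F\,dv=1$. One remark: like the paper's own proof, your argument literally produces the bound with weight $F^{-1}$ rather than $\nu F^{-1}$ (so (\ref{eq:Q}) as stated follows only with constant $\nu_1/\nu_2$), but this unweighted form is precisely what is invoked later in the proof of Lemma \ref{lem:bound}, so nothing is lost.
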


\medskip\noindent{\bf Proof of Lemma~\ref{lem:L}.} We adapt the proof
of \cite[Proposition 1 \& 2]{DGP}.
The fact that $L$ is bounded in $L^2_{F^{-1}}$ is a simple computation, and the proof is left to the reader.
To prove the coercitivity inequality (\ref{eq:Q}), we  write
\begin{eqnarray*}
\int _{\RR^N} L(f) \frac{f}{F}\, dv  & =  & \int _{\RR^N}\int _{\RR^N} \sigma(v,v')f'\frac{f}{F}\, dv\, dv' -\int _{\RR^N} \nu(v)\frac{f^2}{F}\, dv\\
& =  & \int _{\RR^N}\int _{\RR^N} \sigma(v,v')F'\frac{f'}{F'}\frac{f}{F}\, dv\, dv' -\int _{\RR^N} \nu(v)\frac{f^2}{F}\, dv.
\end{eqnarray*}
Next, we note that the second term in the right hand side can be rewritten
\begin{eqnarray*}
\int _{\RR^N} \nu(v)\frac{f^2}{F}\, dv & = & \int _{\RR^N} \int_{\RR^N} \sigma(v',v) F\frac{f^2}{F^2}\, dv\, dv'\\
& = & \int _{\RR^N} \int_{\RR^N} \sigma(v,v') F'\frac{f'^2}{F'^2}\, dv\, dv',
\end{eqnarray*}
as well as  (using the fact that $\nu F=K(F)$)
\begin{eqnarray*}
\int _{\RR^N} \nu(v)\frac{f^2}{F}\, dv & = &\int _{\RR^N} K(F)\frac{f^2}{F^2}\, dv \\
& = & \int _{\RR^N} \int_{\RR^N} \sigma(v,v') F'\frac{f^2}{F^2}\, dv\, dv'.
\end{eqnarray*}

We deduce
\begin{eqnarray}
\int _{\RR^N} L(f) \frac{f}{F}\, dv
& =  & -\frac{1}{2}\int _{\RR^N}\int _{\RR^N} \sigma(v,v')F'  \left[ \frac{f'}{F'}-\frac{f}{F}\right]^2\, dv\, dv' \nonumber \\
& =  & -\frac{1}{2}\int _{\RR^N}\int _{\RR^N} \sigma(v,v')F'  \left[ \frac{g'}{F'}-\frac{g}{F}\right]^2\, dv\, dv' .
\label{coercif1}
\end{eqnarray}
Assumption (B2) then yields
\begin{eqnarray*}\int _{\RR^N} L(f) \frac{f}{F}\, dv & \leq &  -\frac{\nu_1}{2}\int _{\RR^N}\int _{\RR^N} FF'  \left[ \frac{g'}{F'}-\frac{g}{F}\right]^2\, dv\, dv'\\
& =& -\frac{\nu_1}{2}\int _{\RR^N}\int _{\RR^N} F\frac{g'^2}{F'}-2gg'+\frac{g^2}{F}F'\, dv\, dv'
\end{eqnarray*}
Finally, using the fact that $\int g(v)\, dv=0$, we deduce (\ref{eq:Q}).
\qed

\medskip

Using Lemma \ref{lem:L}, we can now prove Lemma \ref{lem:bound}

\medskip\noindent{\bf Proof of Lemma~\ref{lem:bound}.} 
Multiplying (\ref{eq:kin0}) by $f^\eps/F$, we get:
\begin{eqnarray*}
\frac{1}{2} \, \frac{d}{dt} \int_{\RR^{2N}}  |f^\eps|^2\frac{1}{F}\,
dx\,dv  & = &  
\frac{1}{\theta(\eps) }  \, \int_{\RR^{2N}}  L(f^\eps) \frac{f^\eps}{F}\\
& \leq &  - \frac{\nu_1}{\theta(\eps)} \, \int_{\RR^{2N}}\frac{|g^\eps|^2}{F}\, dx\,dv .
\end{eqnarray*}
with $g^\eps=f^\eps-\rho^\eps F$.
We deduce:
\begin{eqnarray*}
&& \!\!\!\!\!\!\!\!\!\!\!\!\!\!\!\!\!\!\!\!\!\!\!\!  \frac12 \, \int_{\RR^{2N}}
\frac{|f^\eps|^2}{F}\, dx\,dv 
  +  \frac{\nu_1}{ \, \theta(\eps)} \, 
  \int_0^t \int_{\RR^{2N}}\frac{|g^\eps|^2 }{F}\, dx\,dv\, ds \\
&& \qquad\qquad\qquad\qquad\qquad\qquad  \leq
\frac12 \, \int_{\RR^{2N}} \frac{|f_0|^2}{F}\, dx\,dv.
\end{eqnarray*}
This inequality shows that $f^\eps$ is bounded in
$L^\infty(0,\infty,L^2_{F^{-1}})$. We also get
$$ 
\int_0^t \int_{\RR^{2N}} \frac{|g^\eps|^2 }{F}\, dx\,dv\, ds \leq 
C \,\|f_0\|_{L^2(F^{-1})} \, \theta(\eps)  .
$$
\medskip
 
Finally, Cauchy-Schwarz inequality implies:
\begin{eqnarray*}
\int_{\RR^N} |\rho^\eps|^2\, dx & = & \int_{\RR^N} \left|\int_{\RR^N} f^\eps\, dv\right|^2 \, dx \\
& \leq & \int_{\RR^{2N}}  |f^\eps|^2\frac{1}{F}\, dv \int_{\RR^N} F\, dv \, dx =
 \int_{\RR^{2N}} |f^\eps|^2\frac{1}{F}\, dv \, dx .
\end{eqnarray*}
\qed

\end{document}